\newcommand{\rset}{\mathbb{R}}
\newcommand{\bmtrx}{\left[\begin{array}}
\newcommand{\emtrx}{\end{array}\right]}
\newtheorem{assumption}[theorem]{Assumption}
\begin{document}

\title{ Random block coordinate descent methods for linearly constrained
optimization over networks}

\author{ I. Necoara,  Yu. Nesterov and F. Glineur}

\institute{I. Necoara  is with Automatic Control and Systems
Engineering Department, University Politehnica Bucharest, 060042
Bucharest, Romania. \email{\texttt{ion.necoara@acse.pub.ro}}. \\
Yu. Nesterov and F. Glineur are with Center for Operations Research
and Econometrics, Catholic University of Louvain, Louvain-la-Neuve,
B-1348, Belgium. \email{ \texttt{\{Yurii.Nesterov,
Francois.Glineur\}@uclouvain.be}}. \\
\textit{The present paper is a generalization of our paper:  I.
Necoara, Yu. Nesterov  and  F. Glineur, A random coordinate descent
method on large-scale optimization problems with linear constraints,
Tech. Rep., Univ. Politehnica Bucharest, 1-25, July 2011 (ICCOPT
2013, Lisbon).} }

\titlerunning{Random  coordinate descent methods on large optimization problems}

\date{Received: March 2013 / Updated: November 2014}

\maketitle

\begin{abstract}
In this paper we develop  random block  coordinate gradient descent
methods for minimizing large scale linearly constrained separable
convex problems over networks.  Since we have coupled constraints in
the problem, we  devise an algorithm that updates in parallel $\tau
\geq 2$ (block) components per iteration.  Moreover, for this method
the computations can be performed in a distributed fashion according
to the structure of the network.   However, its complexity per
iteration  is usually  cheaper than of the full gradient method when
the number of nodes $N$ in the network is large. We prove that for
this method  we obtain in expectation an $\epsilon$-accurate
solution in at most $\mathcal{O}(\frac{N}{\tau \epsilon})$
iterations and thus the convergence rate depends linearly on the
number of (block) components $\tau$ to be updated. For strongly
convex functions  the new method converges linearly. We also focus
on how to choose the probabilities to make the randomized algorithm
to converge as fast as possible and we arrive at solving a sparse
SDP.  Finally, we describe several applications that fit in our
framework, in particular the convex feasibility problem.
Numerically, we show that the parallel  coordinate  descent method
with $\tau>2$  accelerates on its basic counterpart  corresponding
to $\tau=2$.
\end{abstract}

%\begin{keywords}
%Large convex optimization problems, Lipschitz continuous gradient
%functions, linearly coupled constraints,  block-coordinate descent
%method, random algorithm, connected graphs.
%\end{keywords}

%%%%%%%%%%%%%%%%%%%%%%%%%%%%%%%%%%%%%%%%%%%%%%%%%%%%%

\section{Introduction}
\label{s_intro}

\noindent The performance  of a network  composed  of interconnected
subsystems can be improved if the traditionally separated subsystems
are optimized together. Recently, coordinate descent methods have
emerged as a powerful tool for solving large  data network problems:
e.g. resource allocation \cite{Nec:13,XiaBoy:06}, coordination in
multi-agent systems \cite{IshTem:12,Nec:13,YouXie:11}, estimation in
sensor networks or distributed control \cite{NecCli:13}, image
processing \cite{BauBor:96,Com:96,Wri:10} and other areas
\cite{LiuWri:14,QinSch:10,RicTac:13}.  The problems  we consider in
this paper have the following features: {\it the size of data}  is
big so that usual methods based on whole gradient computations are
prohibitive. Moreover {\it the incomplete structure of information}
(e.g.  the data are distributed over the  nodes of the network, so
that at a given time we need to work only with the data available
then)  may also be an obstacle for whole gradient computations. In
this case, an appropriate way to approach these problems is through
coordinate descent methods. These methods were among the first
optimization methods studied in literature  but until recently they
haven't received much attention.

\noindent The main differences in all variants of coordinate descent
methods consist in the criterion of choosing at each iteration the
coordinate over which we minimize the  objective function and the
complexity of this choice. Two classical criteria used often in
these algorithms are the cyclic  and the greedy  coordinate descent
search, which significantly differs by the amount of computations
required to choose the appropriate index. For cyclic coordinate
search estimates on the rate of convergence were given recently in
\cite{BecTet:12}, while for the greedy coordinate search (e.g.
Gauss-Southwell rule) the convergence rate is given e.g. in
\cite{Tse09}.  One paper related to our work is \cite{Bec:12}, where
a 2-coordinate greedy descent method is developed for minimizing a
smooth function subject to a single linear equality constraint and
additional bound constraints on the decision variables.  Another
interesting approach is based on random choice rule, where the
coordinate search is random. Recent complexity results on random
coordinate descent methods  for smooth convex objective functions
were obtained in \cite{Nes:10,Nec:13}. The extension to composite
convex objective functions was given e.g. in
\cite{NecPat:14,RicTac:11}. These methods are inherently serial.
Recently, parallel and distributed implementations of coordinate
descent methods were also analyzed e.g. in
\cite{LiuWri:14,NecCli:13,NecCli:14,RicTac:13}.

\noindent \textit{Contributions}:   In this paper we develop random
block coordinate gradient  descent methods suited for large
optimization problems in networks where the information cannot be
gather centrally, but rather it is distributed over the network.
Moreover, in our paper we focus on optimization problems with
linearly coupled constraints (i.e. the constraint set is coupled).
Due to the coupling in the constraints we  introduce  a $\tau \geq
2$ block variant of random coordinate gradient  descent method, that
involves at each iteration the closed form solution of an
optimization problem only with respect to $\tau$ block variables
while keeping all the other variables fixed. Our approach allows us
to analyze in the same framework several methods: full gradient,
serial random coordinate descent and any parallel random coordinate
descent method in between. For this method we obtain for the
expected values of the objective function a convergence rate
$\mathcal{O}(\frac{N}{\tau k})$, where $k$ is the iteration counter
and $N$ is the number of nodes in the network. Thus, the theoretical
speedup in terms of the number of iterations needed to approximately
solve the problem, as compared to the basic method corresponding to
$\tau=2$,  is an expression depending on the number of  components
$\tau$ to be updated (processors)   and for a complete network  the
speedup is equal to $\tau$ (number of components updated). This
result also  shows that the speedup achieved by our method  on the
class of separable problems with coupling constraints is the same as
for separable problems without coupling constraints. For strongly
convex functions we prove that the new method converges linearly. We
also focus on how to choose the probabilities to make the randomized
algorithm to converge as fast as possible and we arrive at solving
sparse  SDPs. While the most obvious benefit of randomization is
that it can lead to faster algorithms, either in worst case
complexity analysis and/or numerical implementation, there are also
other benefits  of our algorithm that are at least as important:
e.g., the use of randomization leads to a simpler algorithm that is
easier to analyze,   produces a more robust output and  can often be
organized to exploit modern computational architectures (e.g
distributed and parallel~computers).

\noindent \textit{Contents}:  The paper is organized as follows. In
Section \ref{s_formulation} we introduce our optimization model and
assumptions. In Section \ref{sec_nonsep} we propose a  random block
coordinate descent algorithm and derive the convergence rate in
expectation. Section \ref{sec_opt_prob} provides means to choose
optimally the probability distribution. In Section  \ref{sec_app} we
discuss possible applications and we conclude with some preliminary
numerical results in Section \ref{sec_num_simulations}.

\noindent \textit{Notation}: We work in the space $\rset^N$ composed
by column vectors.  For $x, y \in \rset^N$ denote the standard
Euclidian inner product $\langle x, y \rangle = x^Ty$ and the
Euclidian norm $\|x\| = \langle x, x \rangle^{1/2} $. For symmetric
matrices $X,Y$ we consider the inner product $\langle X, Y \rangle =
\text{trace}(XY)$.  We use the same notation $\langle \cdot, \cdot
\rangle$ and $\| \cdot \|$ for spaces of different dimension. We
define the partition of the identity matrix: $I_{N} = [e_1 \cdots
e_N]$, where $e_i \in \rset^{N}$. Then, for any $x \in \rset^{N}$ we
write $x = \sum_i e_i x_i$. Moreover, $D_x$ denotes the diagonal
matrix with the entries $x$ on the diagonal and $x^{-1} =[x_1^{-1}
\cdots x_N^{-1}]^T$. We denote with $e \in \rset^N$ the
$N-$dimensional vector with all entries equal to one. For a positive
semidefinite matrix $W \in \rset^{N \times N}$ we consider the
following order on its eigenvalues $0 \leq \lambda_1 \leq \cdots
\leq \lambda_N$ and $\|x\|_W^2 = x^T W x$.

%%%%%%%%%%%%%%%%%%%%%%%%%%%%%%%%%%%%%%%%%%%%%%%%%%%%%%%
%%%%%%%%%%%%%%%%%%%%%%%%%%%%%%%%%%%%%%%%%%%%%%%%%%%%%%%

\section{Problem formulation}
\label{s_formulation} \noindent We consider large data network
optimization problems where each agent in the network is associated
with a local variable so that their sum is fixed and we need to
minimize a separable convex objective function:
\begin{equation}
\label{scpp}
\begin{aligned}
f^*= & \min\limits_{x_i \in \rset}   f_1(x_1)+ \cdots + f_N(x_N)  \\
     & \text{s.t.:} \;\; x_1 + \cdots + x_N = 0.
\end{aligned}
\end{equation}
For convenience, we will focus on scalar convex functions   $f_i :
\rset \to \rset$, i.e. $x_i \in \rset$, in the optimization model
\eqref{scpp}. However, our results can be easily extended  to the
block case, when $x_i \in \rset^n$, with $n \geq 1$, using the
Kronecker product. Moreover, constraints of the form $\alpha_1 x_1 +
\cdots + \alpha_N x_N = b$, where $x_i \in \rset^n$ and $\alpha_i
\in \rset$, can be easily handled in our framework by a change of
coordinates.

\noindent Optimization problems with linearly coupled constraints
\eqref{scpp} arise in many areas such as resource allocation
\cite{Nec:13,XiaBoy:06}, coordination in multi-agent systems
\cite{IshTem:12,Nec:13,YouXie:11}, image processing
\cite{BauBor:96,Com:96,NecCli:13,Wri:10} and other areas
\cite{LiuWri:14,QinSch:10,RicTac:13}.  For  problem \eqref{scpp} we
associate a network composed of several nodes $[N]= \{1, \cdots,
N\}$ that can exchange information according to a communication
graph ${\cal G}=([N],E)$, where $E$ denotes the set of edges, i.e.
$(i, j) \in E \subseteq [N] \times [N]$ models that node $j$ sends
information to node $i$. We assume that the graph ${\cal G}$ is
undirected and connected. For an integer $\tau \geq 2$, we also
define with ${\cal P}_\tau$ the set of paths of $\tau$ vertices in
the graph. Note that we have at most $\binom{\tau}{N}$ paths of
$\tau$ vertices in a graph. The local information structure imposed
by the graph ${\cal G}$ should be considered as part of the problem
formulation.

\noindent  Our goal  is to devise a distributed algorithm that
iteratively solves the convex problem \eqref{scpp} by passing the
estimate of the optimizer only between neighboring nodes along paths
of $\tau$ vertices. There is great interest in designing such
distributed and parallel algorithms, since centralized algorithms
scale poorly with the number  of nodes and are less resilient  to
failure of the central node.  We  use the notation:
\begin{align*}
x & = [x_1 \cdots x_N]^T \quad \text{and} \quad f(x)  =f_1(x_1)+ \cdots + f_N(x_N).
\end{align*}

\noindent Let us define the extended subspace $S \subseteq \rset^{N}$ and its orthogonal
complement $T \subseteq \rset^{N}$:
\[ S= \left \{ x: \; \sum_{i=1}^N x_i =0 \right\}, \qquad
T= \{u: \;  u_1 = \cdots = u_N \}.  \]

\noindent The  basic assumption considered in this paper is:
\begin{assumption}
\label{ass} We assume that each function $f_i$ is convex and has
Lipschitz continuous gradient with constants $L_i>0$,
i.e. the following inequality holds:
\begin{equation}
\label{grdLp}
 \| \nabla f_i(x_i) - \nabla f_i(y_i) \| \leq L_i \|x_i - y_i
\| \quad  \forall x_i, y_i \in \rset.
\end{equation}
\end{assumption}

\noindent From the Lipschitz property of the gradient \eqref{grdLp},
the following inequality holds  for all $x_i, d_i \in
\rset$ \cite{Nes:04}:
\begin{align}
\label{lip_in} f_i(x_i+d_i) \leq f_i(x_i) + \langle \nabla f_i(x_i),
d_i \rangle + \frac{L_i}{2} \|d_i\|^2.
\end{align}

\noindent We  denote with $X^*$ the set of optimal solutions for
problem \eqref{scpp}. Note that $x^*$ is optimal solution for \eqref{scpp} if and only if:
\[ \sum_{i=1}^N  x_i^* = 0, \;\;\; \nabla f_i (x_i^*) = \nabla f_j (x_j^*) \quad \forall i \not = j \in [N]. \]

%%%%%%%%%%%%%%%%%%%%%%%%%%%%%%%%%%%%%%%%%%%%%%%%%%%%%%%%5
%%%%%%%%%%%%%%%%%%%%%%%%%%%%%%%%%%%%%%%%%%%%%%%%%%%%%%%55

\section{Random  coordinate descent algorithms}
\label{sec_nonsep} \noindent In this section we devise randomized
block coordinate gradient  descent algorithms for solving the
separable convex problem \eqref{scpp} and analyze their convergence.
Since we have coupled constraints in the problem, the algorithm has
to update in parallel $\tau \geq 2$ components per iteration.
Usually, the algorithm  can be accelerated by parallelization, i.e.
by using more than one pair of coordinates per iteration.  Our
approach allows us to analyze in the same framework several methods:
full gradient ($\tau=N$), serial random coordinate descent
($\tau=2$) and any parallel random coordinate descent method in
between ($<2 \tau < N$).  Let us fix $N \geq \tau \geq 2$ and we
denote with ${\cal N} \in {\cal P}_\tau$  a path of $\tau$ vertices
in the connected undirected  graph ${\cal G}$. We also assume
available a probability distribution $p_{\cal N}$ over the set
${\cal P}_\tau$ of paths  of $\tau$ vertices in the  graph ${\cal
G}$. Then, we can derive a randomized $\tau$  coordinate descent
algorithm where we update at each iteration only $\tau$ coordinates
in the vector $x$. Let us define ${\cal N} = (i_1, \cdots i_\tau)
\in {\cal P}_\tau$, with $i_l \in [N]$, $s_{\cal N} = [s_{i_1}
\cdots s_{i_\tau}]^T \in \rset^{\tau}$, $L_{\cal N} = [L_{i_1}
\cdots L_{i_\tau}]^T \in \rset^{\tau}$ and $\nabla f_{\cal N}
=[\nabla f_{i_1} \cdots \nabla f_{i_\tau}]^T \in \rset^{\tau}$.
Under assumption \eqref{grdLp} the following inequality holds:
\begin{equation}
\label{lipN}
 f(x+ \sum_{i \in {\cal N}} e_i s_i) \le f(x) + \langle \nabla
 f_{\cal N}(x), s_{\cal N} \rangle + \frac{1}{2}
 \| s_{\cal N}  \|^2_{D_{L_{\cal N}}}.
\end{equation}

\noindent Based on the inequality \eqref{lipN}  we can devise a
general randomized $\tau$  coordinate descent algorithm for
problem \eqref{scpp}, let us call it $RCD_\tau$. Given an $x$ in the
feasible set $S$, we choose the coordinate $\tau$-tuple ${\cal N}
\in {\cal P}_\tau$ with probability $p_{\cal N}$. Let the next
iterate be chosen as follows:
\begin{equation*}
x^+ = x + \sum_{i \in {\cal N}} e_i d_i,
\end{equation*}
i.e. we update $\tau$  components in the vector $x$, where
the direction $d_{\cal N}$ is  determined by requiring that the next
iterate $x^+$ to be also feasible for \eqref{scpp} and minimizing
the right hand side in \eqref{lipN}, i.e.:
\[ d_{\cal N} = \arg \min_{s_{\cal N}: \sum_{i \in {\cal N}} s_{i} =0} f(x) +
\langle \nabla f_{\cal N}(x), s_{\cal N} \rangle + \frac{1}{2}
 \| s_{\cal N}  \|^2_{D_{L_{\cal N}}} \]
or explicitly, in closed form:
\[  d_i =  \frac{1}{L_i} \frac{\sum_{j \in {\cal N}} \frac{1}{L_j} \left( \nabla f_j(x_j) - \nabla f_i(x_i) \right)} {\sum_{j \in {\cal N}} \frac{1}{L_j}}  \qquad \forall i \in {\cal N}. \]

\noindent In conclusion, we obtain the following randomized $\tau$
coordinate gradient descent method:
\begin{center}
\fbox{
\begin{minipage}[c]{8.5cm}
\textbf{Algorithm $RCD_\tau$}
\begin{equation*}
\begin{split}
&1. \; \text{ choose $\tau$-tuple ${\cal
N}_k = (i_1^k, \cdots i^k_\tau)$ with probability $p_{\cal N}$}    \\
&2. \;  \text{set $x^{k+1}_i = x^k_i \;\; \forall i \notin {\cal N}_k$
and $x^{k+1}_{i} = x^k_{i} + d^k_i  \;\; \forall i \in {\cal N}_k$.} \\
\end{split}
\end{equation*}
\end{minipage}
}
\end{center}

\noindent  Clearly, algorithm $RCD_\tau$ is distributed since only
neighboring nodes along a path in the graph need to communicate at
each iteration. Further, at each iteration only $\tau$ components of
$x$ are updated, so that our method has low complexity per
iteration.  Finally, in our algorithm we maintain feasibility at
each iteration, i.e. $x_{1}^k + \cdots + x_N^k   = 0$ for all $k
\geq 0$. The random choice of coordinates makes the algorithm
adequate for parallel and distributed implementations  and thus more
flexible than greedy coordinate descent methods \cite{Bec:12,Tse09}.
In particular, for $\tau=2$, we choose one pair of connected nodes
$(i_k,j_k) \in E$ with some given  probability $p_{i_kj_k}$ and
obtain the following basic iteration:
\[ x^{k+1} = x^k + \frac{1}{L_{i_k} + L_{j_k}} (e_{i_k} - e_{j_k})
\left( \nabla f_{j_k}(x_{j_k}^k ) - \nabla f_{i_k}(x^k_{i_k})
\right). \]
Note that our algorithm belongs to the class of
center-free methods (in \cite{XiaBoy:06} the term center-free refers
to the absence of a coordinator) with the following iteration:
\begin{align}
\label{center_free} x^{k+1}_i = x^k_i + \sum_{j \in {\cal N}}
w_{ij}^k \left( \nabla f_j(x^k_j) - \nabla f_i(x^k_i) \right) \qquad
\forall i \in [N],
\end{align}
with appropriate weights $w_{ij}^k$. Based on the inequality
\eqref{lipN} and the optimality conditions for the subproblem corresponding to $d_{\cal N}$,  the following decrease in the objective function values
can be derived:
\begin{align*}
f(x^+) & \leq   f(x) - \sum_{i \in {\cal N}} \frac{
\left| \sum_{j \in {\cal N}} \frac{1}{L_j} \left( \nabla f_j(x_j) -
\nabla f_i(x_i) \right) \right|^2} {2L_i(\sum_{j \in {\cal N}} \frac{1}{L_j})^2} \\
& = f(x) - \frac{1}{2} \nabla f(x)^T G_{{\cal N}} \nabla f(x),
\end{align*}
where the matrix $G_{{\cal N}}$  is defined as follows:
\begin{align}
\label{exp_GN}
G_{{\cal N}}=  D_{L_{\cal N}}^{-1} - \frac{1}{\sum_{i \in {\cal N}} 1/L_i}
L^{-1}_{\cal N} (L^{-1}_{\cal N})^T,
\end{align}
where, with an abuse of notation,  $L_{\cal N} \in \rset^N$  denotes
the vector with components zero outside the index set ${\cal N}$ and
components $L_i$ for $i \in {\cal N}$.   Therefore, taking the
expectation over the random $\tau$-tuple ${\cal N} \in {\cal
P}_\tau$, we obtain the following inequality:
\begin{equation}
\label{expdecreaseM} E [ f(x^+) \; | \;x ] \le f(x) - \frac{1}{2}
\nabla f(x)^T G_\tau \nabla f(x),
\end{equation}
where $G_\tau = \sum_{\cal N \in {\cal P}_\tau} p_{\cal N} G_{\cal
N}$ and can be interpreted as a weighted Laplacian for the graph
${\cal G}$. From the decrease in the objective function values given
above, it follows immediately that the matrix $G_\tau$ is positive
semidefinite and has an eigenvalue $\lambda_1(G_\tau) = 0$ with the
corresponding eigenvector  $e \in T$. Since the graph is connected, it
also follows that the eigenvalue $\lambda_1(G_\tau) = 0$ is simple,
i.e. $\lambda_2(G_\tau) > 0$.

\noindent On the extended subspace $S$ we now define a norm  that
will be used subsequently for measuring distances in this subspace.
We define the primal ``norm'' induced by the positive semidefinite matrix ${G_\tau}$ as:
 \[ \| u \|_{G_\tau} = \sqrt{u^T {G_\tau} u} \quad \forall u \in \rset^{N}. \]
Note that $\| u \|_{G_\tau} = 0$ for all $u \in T$ and  $\| u \|_{G_\tau} > 0$ for all $u \in \rset^n \setminus T$. On the subspace $S$ we introduce its extended dual norm:
\begin{align*}
 \| x\|_{G_\tau}^* = \max_{u\in \rset^{N}: \|u \|_{G_\tau} \leq 1} \langle x,
u \rangle  \quad \forall x \in S.
\end{align*}

\noindent Using the definition of conjugate norms, the
Cauchy-Schwartz inequality holds:
\[  \langle u, x \rangle \leq \| u \|_{G_2} \cdot \|
x \|_{G_2}^* \quad \forall x \in S, \;  u \in \rset^{N}.  \]

\noindent Let us define the average value: $\hat u =
\frac{1}{N}\sum_{i=1}^N u_i$.  Then, the dual norm can be computed
for any $x \in S$ as follows:
\begin{align*}
& \| x\|_{G_\tau}^*  = \max_{u \in \rset^{N}: \; \langle {G_\tau} u, u \rangle \leq 1} \langle x, u \rangle = \max_{u: \langle {G_\tau} \left( u -  \hat u e \right), u - \hat u e \rangle \leq 1} \langle x, u - \hat u e \rangle  \\
& = \max_{u: \langle {G_\tau} u, u \rangle \leq 1, \sum_{i=1}^N u_i =0}
\langle x, u \rangle = \max_{u: \langle {G_\tau} u, u \rangle \leq 1, e^Tu=0 } \langle x,
u \rangle \\
&  = \max_{u: \langle {G_\tau} u, u \rangle \leq 1,
u^T e e^T u \leq 0
} \langle x, u \rangle \\
& = \min_{\nu, \mu \geq 0} \max_{u \in \rset^N}  [\langle x, u \rangle + \mu(1 -
\langle {G_\tau} u, u \rangle) - \nu \langle  e e^T u, u \rangle]  \\
& =  \min_{\nu, \mu \geq 0} \mu + \langle (\mu {G_\tau} + \nu e e^T)^{-1} x, x \rangle  = \min_{\nu \geq 0} \min_{\mu \geq 0} [\mu + \frac{1}{\mu} \langle
({G_\tau} + \frac{\nu}{\mu} e e^T)^{-1} x, x \rangle]  \\
& = \min_{\zeta \geq 0} \sqrt{\langle ( {G_\tau} + \zeta e e^T)^{-1} x,
x \rangle}.
\end{align*}

\noindent In conclusion, we obtain an extended  dual norm  that is
well defined  on  subspace~$S$:
\begin{align}
\label{primalnorm}
 \| x \|_{G_\tau}^* = \min_{\zeta \geq 0} \sqrt{\langle \left(
{G_\tau} + \zeta e e^T \right)^{-1}  x, x \rangle} \quad
\forall x \in S.
\end{align}

\noindent Using the eigenvalue decomposition  of the positive
semidefinite matrix ${G_\tau} = \Xi \text{diag}(0, \lambda_2,
\cdots, \lambda_N) \Xi^T$, where $\lambda_i$ are its positive
eigenvalues and $\Xi=[e \; \xi_2 \cdots \xi_N]$ such that $\langle
e, \xi_i \rangle = 0$ for all $i$, then:
\[ ({G_\tau} + \zeta e e^T)^{-1} = \Xi \text{diag}(\zeta
\|e\|^2, \lambda_2, \cdots, \lambda_N)^{-1} \Xi^T. \] From
\eqref{primalnorm} it follows immediately that our defined norm has
the following closed form expression:
\begin{align}
\label{norm_pseudo} \| x \|_{G_\tau}^* = \sqrt{x^T G_\tau^+  x}
\qquad \forall x \in S,
\end{align} where $G_\tau^+ =  \Xi
\text{diag}(0, \frac{1}{\lambda_2}, \cdots, \frac{1}{\lambda_N})
\Xi^T$ denotes the pseudoinverse of the matrix $G_\tau$.

%%%%%%%%%%%%%%%%%%%%%%%%%%%%%%%%%%%%%%%%%%%%%%%%%%%%%%%%

\subsection{Convergence rate: smooth case}
\noindent In order to estimate the rate of convergence of our
algorithm in the smooth case (Assumption \ref{ass}) we introduce the
following distance that takes into account that our algorithm is a
descent method:
\[ {\cal R}(x^0) = \max_{\{x \in S: f(x) \leq f(x^0)\}} \; \min_{x^* \in X^*} \| x - x^*\|_{G_\tau}^*, \]
which measures the size of the level set of $f$ given by $x^0$. We
assume that this distance is finite for the initial iterate $x^0$.
After $k$ iterations of the  algorithm, we generate a random output
$(x^k, f(x^k))$, which depends on the observed implementation of
random variable:
\[ \eta^k =  ({\cal N}_0, \cdots,  {\cal N}_k). \]
Let us define the expected value of the objective function w.r.t. $\eta^k$:
\[  \phi_k = E \left[ f(x^k) \right]. \]

\noindent We now  prove the main result of this section, i.e.
sublinear convergence in mean for the smooth convex case:
\begin{theorem}
\label{th1} Let Assumption  \ref{ass} hold for the optimization
problem \eqref{scpp} and the sequence $(x^k)_{k \geq 0}$ be
generated by algorithm $RCD_\tau$. Then, we have the following
sublinear rate of convergence for the expected values of the
objective function:
\begin{equation}
\label{estimate}
  \phi_k - f^* \le \frac{2 {\cal R}^2(x^0)}{k}.
\end{equation}
\end{theorem}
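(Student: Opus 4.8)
The plan is to follow the standard template for complexity analysis of randomized coordinate descent methods, adapted to the feasible-set geometry encoded by $G_\tau$. The starting point is the expected decrease inequality \eqref{expdecreaseM}, which gives
\[
\phi_{k+1} \le \phi_k - \frac{1}{2}\, E\!\left[ \nabla f(x^k)^T G_\tau \nabla f(x^k) \right].
\]
The key is to lower-bound the quadratic form $\nabla f(x)^T G_\tau \nabla f(x)$ in terms of the optimality gap $f(x)-f^*$. First I would establish the bound
\[
f(x) - f^* \le \langle \nabla f(x), x - x^* \rangle \le \|\nabla f(x)\|_{G_\tau}\cdot \|x - x^*\|_{G_\tau}^*,
\]
where the first inequality is convexity of $f$ and the second is the Cauchy--Schwartz inequality for the conjugate norms $\|\cdot\|_{G_\tau}$ and $\|\cdot\|_{G_\tau}^*$ stated earlier. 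Since $\|\nabla f(x)\|_{G_\tau} = \sqrt{\nabla f(x)^T G_\tau \nabla f(x)}$, this rearranges to the crucial estimate
\[
\nabla f(x)^T G_\tau \nabla f(x) \ge \frac{(f(x) - f^*)^2}{\bigl(\|x - x^*\|_{G_\tau}^*\bigr)^2} \ge \frac{(f(x) - f^*)^2}{{\cal R}^2(x^0)},
\]
where the last step uses that the algorithm is a descent method, so every iterate stays in the level set $\{x\in S : f(x)\le f(x^0)\}$ and hence $\|x-x^*\|_{G_\tau}^* \le {\cal R}(x^0)$ after choosing $x^*$ as the minimizer in the definition of ${\cal R}$.

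The subtle point here, and the step I expect to require the most care, is the passage from the pointwise inequality to one for the expectations $\phi_k$. The quantity $(f(x^k)-f^*)^2/{\cal R}^2(x^0)$ is convex in $f(x^k)$, so by Jensen's inequality $E\bigl[(f(x^k)-f^*)^2\bigr] \ge (E[f(x^k)]-f^*)^2 = (\phi_k - f^*)^2$. Taking expectations in \eqref{expdecreaseM} and combining with the lower bound then yields the recursion
\[
\phi_{k+1} - f^* \le (\phi_k - f^*) - \frac{1}{2{\cal R}^2(x^0)}(\phi_k - f^*)^2.
\]
I would also need to verify that ${\cal R}(x^0)$ genuinely controls the distance at all iterates; this rests on the descent property, which follows directly from the deterministic decrease derived before \eqref{expdecreaseM}, so each realization of $x^k$ lies in the same level set and the bound holds deterministically before averaging.

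Finally I would convert the quadratic recursion into the claimed $O(1/k)$ rate by the standard device of passing to reciprocals. Writing $\Delta_k = \phi_k - f^*$ and dividing the recursion $\Delta_{k+1}\le \Delta_k - \frac{1}{2{\cal R}^2(x^0)}\Delta_k^2$ through by $\Delta_k \Delta_{k+1}$ (both nonnegative) gives
\[
\frac{1}{\Delta_{k+1}} \ge \frac{1}{\Delta_k} + \frac{1}{2{\cal R}^2(x^0)}\cdot\frac{\Delta_k}{\Delta_{k+1}} \ge \frac{1}{\Delta_k} + \frac{1}{2{\cal R}^2(x^0)},
\]
where the last inequality uses $\Delta_{k+1}\le \Delta_k$. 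Summing this telescoping bound from $0$ to $k-1$ yields $1/\Delta_k \ge k/(2{\cal R}^2(x^0))$, which is exactly the estimate \eqref{estimate}. The only genuine obstacle is the Jensen step ensuring the recursion closes at the level of expectations rather than only pathwise; the remaining arithmetic is routine.
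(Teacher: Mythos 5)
Your proposal is correct and follows essentially the same route as the paper's proof: convexity plus the conjugate-norm Cauchy--Schwartz inequality to bound $f(x^k)-f^*$ by ${\cal R}(x^0)\,\|\nabla f(x^k)\|_{G_\tau}$, the expected-decrease inequality \eqref{expdecreaseM} to obtain the quadratic recursion for $\Delta_k=\phi_k-f^*$, and the reciprocal/telescoping device to conclude. The only difference is that you make explicit the Jensen step $E[(f(x^k)-f^*)^2]\ge(\phi_k-f^*)^2$, which the paper applies silently when taking expectations.
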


\begin{proof}
Recall that all our iterates are feasible, i.e. $x^k \in S$.  From
convexity of $f$ and the definition of the norm $\| \cdot
\|_{G_\tau}$ on the subspace $S$,  we get:
\begin{align*}
 f(x^l) - f^* & \leq \langle \nabla f(x^l), x^l - x^* \rangle  \leq \|x^l - x^*\|_{G_\tau}^*  \|\nabla f(x^l)\|_{G_\tau} \\
&  \leq {\cal R}(x^0) \cdot  \|\nabla f(x^l)\|_{G_\tau} \qquad
\forall l \geq 0.
\end{align*} Combining this
inequality with \eqref{expdecreaseM}, we obtain:
\[  f(x^l) - E \left[ f(x^{l+1}) \;|\; x^l \right]  \geq  \frac{(f(x^l) - f^*)^2}{2
{\cal R}^2(x^0)}, \]
 or equivalently
\begin{align*}
E \left[ f(x^{l+1}) \;|\; x^l \right]  - f^* \le f(x^l)- f^* -
\frac{(f(x^l) - f^*)^2}{2 {\cal R}^2(x^0)}.
\end{align*}
Taking the expectation of both sides of this inequality  in
$\eta_{l-1}$ and   denoting $\Delta_l = \phi_l -f^*$ leads to:
\begin{equation*}
\Delta_{l+1}\le \Delta_l - \frac{\Delta_l^2}{2 {\cal R}^2(x^0)}.
\end{equation*}
Dividing both sides of this inequality with $\Delta_{l}
\Delta_{l+1}$ and taking into account that $\Delta_{l+1} \le
\Delta_l$ (see  \eqref{expdecreaseM}), we obtain:
\begin{equation*}
\frac{1}{\Delta_{l}} \le \frac{1}{\Delta_{l+1}} -
 \frac{1}{2 {\cal R}^2(x^0)} \quad \forall l \geq 0.
\end{equation*}
Adding these inequalities from $l=0, \cdots, k-1$ we get that $0
\leq \frac{1}{\Delta_{0}} \le \frac{1}{\Delta_{k}} - \frac{k}{ 2
{\cal R}^2(x^0)}$ from which we obtain the statement
\eqref{estimate} of the theorem. \qed
\end{proof}

\noindent Theorem  \ref{th1} shows that for smooth convex problem \eqref{scpp}
algorithm $RCD_\tau$ has sublinear rate of convergence in
expectation but with a low complexity per iteration. More
specifically,   the complexity per iteration is ${\cal O} (\tau n_f
+ \tau)$,  where $n_f$ is the maximum  cost of computing the
gradient of each function $f_i$ and $\tau$ is the cost of updating
$x^+$. We  assume that the cost of choosing randomly a $\tau$-tuple
of indices ${\cal N}$ for a given probability distribution is negligible (e.g. for $\tau=2$ the cost is $\ln N$).

%%%%%%%%%%%%%%%%%%%%%%%%%%%%%%%%%%%%%%%%%%%%%%%%%%%%%

\subsection{Convergence rate: strongly convex case}
\noindent Additionally to the assumption of Lipschitz continuous
gradient for each function $f_i$ (see Assumption  \eqref{ass}), we
now assume that the function $f$ is also strongly convex with
respect to the extended norm $\|\cdot\|_{G_\tau}^*$ with convexity
parameter $\sigma_{G_\tau}$ on the subspace $S$. More precisely, the
objective function $f$ satisfies for all $x, y \in S$:
\begin{equation} \label{strongq}
f(x) \ge f(y) + \langle \nabla f(y), x -y \rangle +
\frac{\sigma_{G_\tau}}{2} \left( \| x-y \|_{G_\tau}^* \right)^2.
\end{equation}

\noindent  We now derive linear convergence estimates for
algorithm $RCD_\tau$ under the additional strong convexity assumption:
\begin{theorem}
\label{th3sc} Let Assumption  \eqref{ass} hold and additionally we
assume $f$ to be also $\sigma_{G_\tau}$-strongly convex function with respect
to norm $\|\cdot\|_{G_\tau}^*$ (see \eqref{strongq}). Then, for the
sequence $(x^k)_{k \geq 0}$ generated by algorithm $RCD_\tau$ we
have the following linear estimate for the convergence rate in
expectation:
\begin{equation}
\label{strongconv} \phi_k - f^* \le (1 - \sigma_{G_\tau})^k
\left(f(x^0) - f^* \right).
\end{equation}
\end{theorem}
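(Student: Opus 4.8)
The plan is to follow the same skeleton as the proof of Theorem~\ref{th1}, but to replace the plain convexity bound on the optimality gap by a sharper gradient-domination inequality coming from strong convexity. This upgrades the per-iteration decrease from being quadratic in the gap to being proportional to the gap, which is exactly what produces a geometric (linear) rate instead of the sublinear one.

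First I would derive the gradient-domination inequality. Taking $y=x^l$ in the strong convexity relation \eqref{strongq} and minimizing both sides over $x \in S$, the left-hand side becomes $f^*$ (all iterates and minimizers lie in $S$), while the right-hand side reduces to minimizing $\langle \nabla f(x^l), d \rangle + \frac{\sigma_{G_\tau}}{2}\big(\|d\|_{G_\tau}^*\big)^2$ over $d = x - x^l \in S$. Using the pseudoinverse form \eqref{norm_pseudo} of the dual norm together with the Fenchel duality between $\|\cdot\|_{G_\tau}$ and $\|\cdot\|_{G_\tau}^*$, this minimum equals $-\frac{1}{2\sigma_{G_\tau}} \|\nabla f(x^l)\|_{G_\tau}^2$, so that
\[ \| \nabla f(x^l) \|_{G_\tau}^2 \ge 2 \sigma_{G_\tau} \left( f(x^l) - f^* \right) \qquad \forall l \ge 0. \]

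Next I would substitute this into the expected decrease inequality \eqref{expdecreaseM}, noting that $\nabla f(x)^T G_\tau \nabla f(x) = \|\nabla f(x)\|_{G_\tau}^2$. This immediately gives $E[f(x^{l+1}) \,|\, x^l] - f^* \le (1-\sigma_{G_\tau})\left(f(x^l)-f^*\right)$. Taking the expectation over $\eta_{l-1}$ and writing $\Delta_l = \phi_l - f^*$, I obtain the one-step contraction $\Delta_{l+1} \le (1-\sigma_{G_\tau}) \Delta_l$, and iterating it from $l=0$ to $k-1$ yields the claimed estimate \eqref{strongconv}.

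The step that requires the most care is the minimization in the second paragraph: both norms are singular along $T = \text{span}(e)$, so one must verify that the value returned is exactly $-\frac{1}{2\sigma_{G_\tau}}\|\nabla f(x^l)\|_{G_\tau}^2$ rather than some projected surrogate. Diagonalizing $G_\tau = \Xi\,\text{diag}(0,\lambda_2,\dots,\lambda_N)\,\Xi^T$ and expanding $d \in S$ in the eigenvectors $\xi_2,\dots,\xi_N$ that span $S$ settles this cleanly: the component of $\nabla f(x^l)$ along $e$ drops out of $\langle \nabla f(x^l), d\rangle$ (consistent with $f^*-f(x^l)$ being invariant to it), each remaining coordinate contributes a scalar quadratic that is minimized in closed form, and summing the contributions reproduces exactly the primal quantity $\|\nabla f(x^l)\|_{G_\tau}^2$. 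Finally, for \eqref{strongconv} to be a genuine contraction one needs $0 < \sigma_{G_\tau} \le 1$, which is forced by the one-step inequality combined with $\Delta_{l+1}\ge 0$.
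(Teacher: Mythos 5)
Your proposal is correct and follows essentially the same route as the paper: minimize both sides of the strong convexity inequality \eqref{strongq} over $S$ to get the gradient-domination bound $\|\nabla f(x^k)\|_{G_\tau}^2 \ge 2\sigma_{G_\tau}(f(x^k)-f^*)$, combine it with the expected decrease \eqref{expdecreaseM}, and take expectations to obtain the one-step contraction. Your extra care with the degenerate direction $e$ in the minimization is a welcome elaboration of a step the paper states without detail, but it is not a different argument.
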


\begin{proof}
From \eqref{expdecreaseM} we have:
\begin{equation*}
2\left( f(x^k) -  E \left[ f(x^{k+1}) \;|\; x^k \right] \right) \ge
\|\nabla f(x^k)\|_{G_\tau}^{2}.
\end{equation*}
On the other hand, minimizing both sides of inequality
\eqref{strongq} over $x \in S$ we have:
\begin{equation*}
\|\nabla f(y)\|_{G_\tau}^{2} \ge 2 \sigma_{G_\tau} (f(y) - f^*)
\quad \forall y \in  S
\end{equation*}
and for $y = x^k$ we get:
\begin{equation*}
\|\nabla f(x^k)\|_{G_\tau}^{2} \ge 2 \sigma_{G_\tau} \left( f(x^k) -
f^* \right).
\end{equation*}
Combining the first inequality with the last one, and taking
expectation in $\eta_{k-1}$ in both sides, we prove the statement of
the theorem.  \qed
\end{proof}

\noindent  We  notice that if $f_i$'s are strongly convex functions
with respect to the Euclidian norm, with convexity parameter
$\sigma_i$, i.e.:
\begin{equation*}
f_i(x_i) \ge f_i(y_i) + \langle\nabla f_i(y_i), x_i-y_i\rangle +
\frac{\sigma_i}{2} |x_i-y_i|^2 \quad \forall x_i, y_i,
\quad  i \in [N],
\end{equation*}
then  the whole  function $f = \sum_i f_i$ is also strongly convex
w.r.t. the extended norm induced by the positive definite matrix
$D_\sigma$, where $\sigma=[\sigma_1 \cdots \sigma_N]^T$,
i.e.:
\[ f(x) \ge f(y) + \langle\nabla f(y), x-y\rangle +
\frac{1}{2} \|x - y \|^2_{D_\sigma} \quad
\forall x, y.
\]
Note that in this extended norm  $\| \cdot \|_{D_\sigma}$ the strongly convex parameter of the function $f$ is equal to $1$. It follows immediately that the function $f$ is also
strongly convex with respect to the norm  $\|\cdot\|_{G_\tau}^*$ with
the strongly convex parameter $\sigma_{G_\tau}$ satisfying:
\begin{equation*}
\sigma_{G_\tau} D_\sigma^{-1} \preceq {G_\tau} + \zeta e e^T,
\end{equation*}
for some  $\zeta \ge 0$. In conclusion, the strong convexity parameter $\sigma_{G_\tau}$ needs to satisfy the following LMI:
\begin{align}
\label{eq_scparam}
\sigma_{G_\tau} I_N \preceq D_\sigma^{1/2} (G_\tau + \zeta e e^T)D_\sigma^{1/2}.
\end{align}

\noindent Finally,  we should notice that we can also easily derive
results showing that the problem is approximately solved with high
probability  in both situations, smooth and/or  strongly convex
case,  see e.g. \cite{Nec:13} for details.

%%%%%%%%%%%%%%%%%%%%%%%%%%%%%%%%%%%%%%%%%%%%%%%%%%%%%%%%%%%%%%%%%%%55

\subsection{How the number of updated blocks $\tau$ enters into the convergence rates}
\noindent Note that matrix $G_\tau$ depends directly on the number of
components $\tau$ to be updated and therefore, $R(x^0)$ is
also depending on $\tau$. Moreover, the convergence rate can be
explicitly expressed in terms of $\tau$ for some specific choices
for probabilities and for the graph ${\cal G}$. In particular, let
us assume a complete graph ${\cal G}$ and that we know some
constants $R_i>0$ such that for any $x$ satisfying $f(x) \leq
f(x^0)$ there exists an $x^* \in X^*$ such that:
\[ |x_i - x_i^* | \le R_i \quad \forall i \in [N], \] and recall
that $L=[L_1 \cdots L_N]^T$ and $D_L$ is the diagonal matrix with
entries on the diagonal given by the vector $L$. Moreover, let us
consider probabilities depending on the Lipschitz constants $L_i$
for any path ${\cal N} \in {\cal P}_\tau$ of $\tau$ vertices in  the
complete graph ${\cal G}$, defined as:
\begin{align}
\label{probinv} p_{\cal N}^{L} = \frac{\sum_{i \in {\cal N}}
1/L_i}{\sum_{{\cal N} \in {\cal P}_\tau}  \sum_{i \in {\cal N}}
1/L_i }.
\end{align}

\begin{theorem}
Under Assumption \ref{ass} and  for the choice of probabilities
\eqref{probinv} on a complete graph ${\cal G}$, the following
sublinear convergence rate for algorithm $RCD_\tau$ in the expected
values of the objective function is obtained:
\[  \phi_k - f^* \leq  \frac{N-1}{\tau-1} \cdot \frac{2 \sum_i L_i R_i^2}{k}.  \]
\end{theorem}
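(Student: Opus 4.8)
\noindent The plan is to specialize Theorem~\ref{th1}, which already gives $\phi_k-f^*\le 2\mathcal{R}^2(x^0)/k$; the entire task therefore reduces to estimating $\mathcal{R}^2(x^0)$ for the probabilities \eqref{probinv} on the complete graph. Since $\mathcal{R}(x^0)$ is built from the dual norm $\|\cdot\|_{G_\tau}^*$, which is in turn governed by the matrix $G_\tau$, I would proceed in three stages: (i) compute $G_\tau$ in closed form for \eqref{probinv}; (ii) reduce $\|\cdot\|_{G_\tau}^*$ to a weighted Euclidean norm; and (iii) insert the resulting bound on $\mathcal{R}^2(x^0)$ into \eqref{estimate}.

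\noindent First I would evaluate $G_\tau=\sum_{\mathcal{N}\in\mathcal{P}_\tau}p_{\mathcal{N}}^{L}G_{\mathcal{N}}$ using \eqref{exp_GN}. Since each $G_{\mathcal{N}}$ depends only on the vertex set $\mathcal{N}$ and the graph is complete, I would sum over the $\binom{N}{\tau}$ subsets of size $\tau$, relying on the counts that a fixed index $i$ lies in $\binom{N-1}{\tau-1}$ of them and a fixed pair $\{i,j\}$ in $\binom{N-2}{\tau-2}$. The crucial observation is that, for the weights \eqref{probinv}, the factor $\sum_{i\in\mathcal{N}}1/L_i$ in the denominator of the rank-one part of \eqref{exp_GN} cancels exactly against the numerator of $p_{\mathcal{N}}^{L}$, giving
\[
p_{\mathcal{N}}^{L}G_{\mathcal{N}}=\frac{1}{S_\tau}\Big[\big(\textstyle\sum_{k\in\mathcal{N}}1/L_k\big)D_{L_{\mathcal{N}}}^{-1}-L_{\mathcal{N}}^{-1}(L_{\mathcal{N}}^{-1})^T\Big],\qquad S_\tau=\binom{N-1}{\tau-1}\sum_i \frac1{L_i}.
\]
Summing and using $\binom{N-2}{\tau-2}/\binom{N-1}{\tau-1}=(\tau-1)/(N-1)$ I expect to obtain $G_\tau=\frac{\tau-1}{N-1}\,G_N$, where $G_N:=D_L^{-1}-\frac{1}{\sum_i 1/L_i}L^{-1}(L^{-1})^T$ is exactly the matrix \eqref{exp_GN} of the full-gradient method ($\tau=N$). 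This identity is, in my view, the heart of the argument.

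\noindent From $G_\tau=\frac{\tau-1}{N-1}G_N$ I get $G_\tau^{+}=\frac{N-1}{\tau-1}G_N^{+}$, so by \eqref{norm_pseudo} the norms scale as $\|x\|_{G_\tau}^{*}=\sqrt{\tfrac{N-1}{\tau-1}}\,\|x\|_{G_N}^{*}$ on $S$, and hence $\mathcal{R}^2(x^0)=\frac{N-1}{\tau-1}\mathcal{R}_N^2(x^0)$, where $\mathcal{R}_N$ is the analogue of $\mathcal{R}$ with $\|\cdot\|_{G_\tau}^*$ replaced by $\|\cdot\|_{G_N}^*$. It then remains to show $\mathcal{R}_N^2(x^0)\le\sum_i L_iR_i^2$, for which I would prove the pointwise estimate $\|d\|_{G_N}^{*}\le\|d\|_{D_L}$ for all $d\in S$. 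Using the min-representation \eqref{primalnorm}, I would fix any $\zeta>0$ and check by direct multiplication---using $D_LL^{-1}=e$, $e^{T}L^{-1}=\sum_i 1/L_i$, $e^{T}D_L=L^{T}$ and $e^{T}d=0$---that
\[
\big(G_N+\zeta ee^{T}\big)^{-1}d = D_Ld-\frac{L^{T}d}{N}\,e\qquad\forall d\in S,
\]
which immediately yields $d^{T}(G_N+\zeta ee^{T})^{-1}d=d^{T}D_Ld$ and therefore $\|d\|_{G_N}^{*}\le\|d\|_{D_L}=\big(\sum_i L_id_i^2\big)^{1/2}$ via \eqref{primalnorm}.

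\noindent To conclude, for any $x\in S$ with $f(x)\le f(x^0)$ I would pick the $x^*\in X^*$ furnished by the hypothesis, so that $|x_i-x_i^*|\le R_i$; then $\|x-x^*\|_{G_N}^{*}\le(\sum_i L_i(x_i-x_i^*)^2)^{1/2}\le(\sum_i L_iR_i^2)^{1/2}$, and maximizing over $x$ and minimizing over $x^*$ gives $\mathcal{R}_N^2(x^0)\le\sum_i L_iR_i^2$. Combining with the scaling above yields $\mathcal{R}^2(x^0)\le\frac{N-1}{\tau-1}\sum_i L_iR_i^2$, and \eqref{estimate} delivers the claimed rate. The step I expect to be most delicate is the pointwise bound $\|d\|_{G_N}^{*}\le\|d\|_{D_L}$: the naive semidefinite ordering $G_N\preceq D_L^{-1}$, immediate from \eqref{exp_GN}, points the wrong way and would only give $\|d\|_{G_N}^{*}\ge\|d\|_{D_L}$, so one genuinely has to exploit the constraint $e^{T}d=0$ through the explicit inverse above rather than a global matrix inequality.
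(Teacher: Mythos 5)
Your proposal is correct and follows essentially the same route as the paper: the same combinatorial evaluation of $G_\tau=\frac{\tau-1}{N-1}\bigl[D_L^{-1}-\frac{1}{e^TL^{-1}}L^{-1}(L^{-1})^T\bigr]$ via the counts $\binom{N-1}{\tau-1}$ and $\binom{N-2}{\tau-2}$, the same reduction of $\|\cdot\|_{G_\tau}^*$ to $\frac{N-1}{\tau-1}\|\cdot\|_{D_L}^2$ on $S$, and the same substitution into Theorem~\ref{th1}. Your only deviation is that you justify the norm identity by explicitly verifying $(G_N+\zeta ee^T)^{-1}d=D_Ld-\frac{L^Td}{N}e$ on $S$ (which in fact gives equality, matching \eqref{norminv}), a step the paper simply asserts.
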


\begin{proof}
Using the definition of the indicator function $1_{\cal N}$, we can
see that:
\begin{align*}
\Sigma_\tau^{-1} & = \sum_{\cal N \in {\cal P}_\tau} \sum_{i \in {\cal N}}
\frac{1}{L_i} = \sum_{j=1}^{\binom{\tau}{N}} \sum_{i \in {\cal
N}_j} \frac{1}{L_i} = \sum_{j=1}^{\binom{\tau}{N}} \sum_{i=1}^N 1_{{\cal N}_j}(i) \frac{1}{L_i}  \\
& = \sum_{i=1}^N \frac{1}{L_i} \left( \sum_{j=1}^{\binom{\tau}{N}}
1_{{\cal N}_j}(i) \right) =  \sum_{i=1}^N \frac{1}{L_i}
\binom{\tau-1}{N-1}.
\end{align*}
Thus, using  $G_\tau = \sum_{\cal N \in {\cal P}_\tau} p_{\cal N}
G_{\cal N}$ and the expression of $G_{{\cal N}}$  given in
\eqref{exp_GN}, we can  derive that matrix $G_\tau$ has the
following expression:
\begin{align*}
G_{\tau} &  = \frac{1}{\Sigma_\tau^{-1}} \sum_{\cal N \in {\cal P}_\tau} \left[
\sum_{i \in {\cal N}} \frac{1}{L_i}  D_{L_{\cal N}}^{-1} - L^{-1}_{\cal N} (L^{-1}_{\cal N})^T \right] \\
&  = \frac{1}{\Sigma_\tau^{-1}} \sum_{j=1}^{\binom{\tau}{N}} \left[
\sum_{i \in {\cal N}_j} \frac{1}{L_i} D_{L_{{\cal N}_j}}^{-1} - L^{-1}_{{\cal N}_j} (L^{-1}_{{\cal N}_j})^T \right] \\
& = \frac{1}{\Sigma_\tau^{-1}}  \binom{\tau-2}{N-2} \left[
\sum_{i=1}^N \frac{1}{L_i} D_{L}^{-1} - L^{-1} (L^{-1})^T \right] \\
&= \frac{\tau-1}{N-1}   \left[ D_{L}^{-1} - \frac{1}{e^T L^{-1}}
L^{-1} (L^{-1})^T \right],
\end{align*}

\noindent Using the previous expression for $G_\tau$ in
\eqref{norm_pseudo}, we get:
\begin{align}
\label{norminv} (\|x\|_{G_\tau}^*)^2 = \frac{N-1}{\tau-1} \sum_{i
\in [N]} L_i |x_i|^2 \quad \forall x \in S.
\end{align}
Using the definition for $R_i$ and the expression \eqref{norminv}
for the norm on $S$, we obtain:
\[ {\cal R}^2(x^0) \leq  {\cal R}^2(R) =  \frac{N-1}{\tau-1}
\sum_{i \in [N]} L_i R_i^2,  \] where $R=[R_1 \cdots R_N]^T$. Using
this expression for ${\cal R}(x^0)$ in Theorem \ref{th1} we get the
statement of the theorem. \qed
\end{proof}

\noindent Note that for  $\tau=N$ we recover the convergence rate of
the full gradient method, while for $\tau=2$ we get the convergence
rate of the basic random coordinate descent method. Thus, the
theoretical speedup of the parallel algorithm $RCD_\tau$ in terms of
the number of iterations needed to approximately solve the problem,
as compared to the basic random coordinate descent method, is equal
in this case to $\tau$ - the number of components to be updated
(number of processors available). This result also shows that the
speedup achieved by our method on the class of separable problems
with coupling constraints is the same as for separable problems
without coupling constraints.

\noindent For the strongly convex case, we  note that combining the Lipschitz inequality
\eqref{lip_in} with the strong convex inequality \eqref{strongq} we get:
\[  \sum_{i \in [N]} L_i | x_i - y_i|^2 \geq \sigma_{G_\tau}
 \left( \| x-y \|_{G_\tau}^* \right)^2 \quad \forall x, y \in S. \]
Now, if we consider e.g. a complete graph and the probabilities
given in \eqref{probinv}, then using the expression for the norm $\|
\cdot \|_{G_\tau}$ given in \eqref{norminv} we obtain
$\sigma_{G_\tau} \leq \frac{\tau -1}{N-1}$. Thus, in
the strongly convex  case the linear convergence
rate in expectation \eqref{strongconv} can be also expressed in terms of $\tau$.

%%%%%%%%%%%%%%%%%%%%%%%%%%%%%%%%%%%%%%%%%%%%%%%%%%%%%%%%%%%%%%%%%%%%
%%%%%%%%%%%%%%%%%%%%%%%%%%%%%%%%%%%%%%%%%%%%%%%%%%%%%%%%%%%%%%%5

\section{Design of optimal probabilities}
\label{sec_opt_prob}
\label{subsecprob} We have several choices for the probabilities
$(p_{\cal N})_{{\cal N} \in {\cal P}_\tau}$ corresponding to paths
of  $\tau$ vertices in the complete graph ${\cal G}$, which the
randomized coordinate descent algorithm $RCD_\tau$ depends on.
For example, we can choose probabilities dependent on the Lipschitz
constants $L_i$:
\begin{align}
\label{pl} p_{\cal N}^\alpha = \frac{\sum_{i \in {\cal
N}}L_{i}^\alpha}{\Sigma^\alpha_\tau}, \quad\;\; \Sigma^\alpha_\tau =
\sum_{{\cal N} \in {\cal P}_\tau} \sum_{i \in {\cal N}}L_{i}^\alpha,
\;\;\; \alpha \in \rset.
\end{align}
Note that for $\alpha = 0$ we recover the uniform probabilities.
Finally, we can design optimal probabilities from the convergence
rate of the method. From the definition of the constants $R_i$ it
follows that:
\[ {\cal R} (x^0) \leq {\cal R}(R) =  \max_{x \in S, |x_i| \leq R_i}    \|x  \|_{G_\tau}^*,
\quad \text{with} \quad  G_\tau = \sum_{\cal N \in {\cal P}_\tau}
p_{\cal N} G_{\cal N}. \] We have the freedom to choose the matrix
$G_\tau$ that depends linearly on the probabilities  $p_{\cal N}$.
For the probabilities $(p_{\cal N})_{{\cal N} \in {\cal P}_\tau}$
corresponding to paths of  $\tau$ vertices in the complete graph
${\cal G}$ we define the following set of matrices:
\[ {\cal M} = \left\{ G_\tau \! : \quad
 G_\tau = \! \sum_{\cal N \in {\cal P}_\tau}
p_{\cal N} G_{\cal N}, \quad  G_{{\cal N}}=  D_{L_{\cal N}}^{-1} -
\frac{1}{\sum_{i \in {\cal N}} 1/L_i}
   L^{-1}_{\cal N} (L^{-1}_{\cal N})^T \right \}.  \]

\noindent Therefore, we search for the probabilities $p_{\cal N}$
that are the optimal solution of the following optimization problem:
\begin{align*}
{\cal R}^*(x^0) = \min_{p_{\cal N}} {\cal R} (x^0) \leq
\min_{p_{\cal N}} {\cal R} (R)  = \min_{G_\tau \in {\cal M}} \max_{x
\in S, |x_i| \leq R_i} \|x\|_{G_\tau}^*.
\end{align*}

\noindent  Let us define $R=[R_1 \cdots R_N]^T$ and $\nu =[\nu_1
\cdots \nu_N]^T$.  In the next theorem we derive an easily computed
upper bound on ${\cal R}^*(x_0)$ and we provide a way to
suboptimally select the probabilities $p_{\cal N}$:
\begin{theorem}
\label{thsdp} Let Assumption \ref{ass} hold.  Then, a suboptimal
choice of probabilities $(p_{\cal N})_{{\cal N} \in {\cal P}_\tau}$
can be obtained as a solution of the   following SDP problem whose
optimal value is an upper bound on ${\cal R}^*(x_0)$, i.e.:
\begin{align}
\label{sdpopt} \left( {\cal R}^*(x_0) \right)^2 \leq & \min_{G_\tau
\in {\cal M}, \zeta \geq 0, \nu \geq 0} \left\{ \langle \nu, R^2
\rangle: \;\;
\begin{bmatrix}
G_\tau + \zeta e e^T & I_N \\
I_N & D_{\nu} \
\end{bmatrix}
\succeq 0 \right \}.
\end{align}
\end{theorem}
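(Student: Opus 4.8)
The plan is to relax the inner box-constrained maximization defining ${\cal R}(R)$ into a semidefinite-representable upper bound, and then assemble everything into a single SDP over the free parameters $G_\tau \in {\cal M}$, $\zeta$ and $\nu$. As already recorded just before the theorem, $\left({\cal R}^*(x_0)\right)^2 \le \min_{G_\tau \in {\cal M}} \max_{x \in S,\ |x_i| \le R_i} \left(\|x\|_{G_\tau}^*\right)^2$, where I have squared the (nonnegative) objective. Hence it suffices to exhibit, for every feasible triple $(G_\tau, \zeta, \nu)$, an upper bound of the clean form $\langle \nu, R^2 \rangle$ on this inner maximum, and then minimize. Two successive relaxations do the work: first I replace $\left(\|x\|_{G_\tau}^*\right)^2$ by $\langle (G_\tau + \zeta e e^T)^{-1} x, x \rangle$, and then I replace this quadratic by a separable one $x^T D_\nu x$, which is precisely the step that converts an (NP-hard) box-constrained quadratic maximum into a tractable SDP.

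For the first relaxation I would fix any $\zeta \ge 0$ for which $G_\tau + \zeta e e^T \succ 0$; this holds for every $\zeta > 0$ because $G_\tau$ has a simple zero eigenvalue along $e$. Since the closed form \eqref{primalnorm} writes $\left(\|x\|_{G_\tau}^*\right)^2$ as a minimum over $\zeta$, dropping the minimum gives at once $\left(\|x\|_{G_\tau}^*\right)^2 \le \langle (G_\tau + \zeta e e^T)^{-1} x, x \rangle$ for all $x \in S$, and therefore ${\cal R}(R)^2 \le \max_{x \in S,\ |x_i| \le R_i} \langle (G_\tau + \zeta e e^T)^{-1} x, x \rangle$. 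For the second relaxation I would introduce a diagonal matrix $D_\nu$ with $\nu \ge 0$ satisfying $(G_\tau + \zeta e e^T)^{-1} \preceq D_\nu$. Matrix dominance yields $\langle (G_\tau + \zeta e e^T)^{-1} x, x \rangle \le x^T D_\nu x = \sum_i \nu_i x_i^2$, and then the box constraint $|x_i| \le R_i$ together with $\nu \ge 0$ bounds the right-hand side by $\sum_i \nu_i R_i^2 = \langle \nu, R^2 \rangle$ uniformly in $x$; note that this last estimate holds for every $x$ in the box, so the constraint $x \in S$ can simply be discarded at this point. Thus ${\cal R}(R)^2 \le \langle \nu, R^2 \rangle$ for every admissible pair $(\zeta, \nu)$.

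It then remains to express the dominance condition $(G_\tau + \zeta e e^T)^{-1} \preceq D_\nu$ as a linear matrix inequality. Applying the Schur complement to the block matrix in \eqref{sdpopt}, with pivot block $G_\tau + \zeta e e^T \succ 0$ and off-diagonal block $I_N$, the inequality $D_\nu - (G_\tau + \zeta e e^T)^{-1} \succeq 0$ is equivalent to positive semidefiniteness of that block matrix, which is exactly the constraint in \eqref{sdpopt}. Because $G_\tau = \sum_{\cal N} p_{\cal N} G_{\cal N}$ depends affinely on the probabilities, the feasible set is described by an LMI jointly in $(p_{\cal N}, \zeta, \nu)$, and minimizing the linear objective $\langle \nu, R^2 \rangle$ produces the asserted upper bound on $\left({\cal R}^*(x_0)\right)^2$. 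I expect the main conceptual point to be the second relaxation: bounding the nonconvex box-constrained quadratic maximum by its diagonal-dominance surrogate is where tightness is lost, so the SDP value is genuinely an upper bound rather than an exact reformulation. A secondary, purely technical point is that the Schur-complement equivalence needs $G_\tau + \zeta e e^T$ strictly positive definite, which rules out $\zeta = 0$ but is automatically enforced by feasibility of the LMI itself.
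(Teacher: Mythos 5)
Your proof is correct and arrives at the same SDP, but by a genuinely more direct route than the paper. The paper lifts the box-constrained maximization to matrix variables $X = xx^T$, drops the rank-one constraint to obtain the relaxed set ${\cal X}_r$, passes to the Lagrangian dual of the inner maximization over $X$, and eliminates the multipliers $Z$ and $\theta$ to reach the constraint $G_\tau + \zeta e e^T \succeq D_\nu^{-1}$. You instead majorize the quadratic form directly: whenever $(G_\tau + \zeta e e^T)^{-1} \preceq D_\nu$ with $\nu \geq 0$, you get $\langle (G_\tau + \zeta e e^T)^{-1} x, x \rangle \leq x^T D_\nu x \leq \langle \nu, R^2 \rangle$ on the box. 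This is exactly the weak-duality half of the paper's relaxation-plus-dualization, and it is all that the stated one-sided inequality requires. Your route also sidesteps a point the paper glosses over: the paper asserts \emph{equality} when exchanging $\max_x$ with $\min_{\zeta}$, which is not covered by a standard minimax theorem since the objective is convex rather than concave in $x$; you only ever use the $\leq$ direction (drop the minimum over $\zeta$, then minimize at the end), which is automatic. What the paper's longer derivation buys is the identification of the SDP value with the exact optimal value of the rank-relaxed problem, a tightness statement your one-sided majorization does not provide --- but that is not part of the theorem. Your closing observations, that the Schur-complement equivalence needs $G_\tau + \zeta e e^T \succ 0$ and that feasibility of the LMI with off-diagonal block $I_N$ automatically enforces this (hence $\zeta > 0$), are also correct and slightly more careful than the paper's own treatment.
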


\begin{proof}
Using the definition of ${\cal R} (R)$ and of the norm
$\|\cdot\|_{G_2}^*$ we get:
\begin{align*}
\min_{p_{\cal N}} ({\cal R}(R))^2 & \leq  \min_{{G_\tau} \in {\cal
M}} \; \max_{x \in S,\|x_i\| \leq R_i} \left( \|x\|_{G_\tau}^* \right)^2  \\
&= \min_{{G_\tau} \in {\cal M}} \; \max_{x \in S, \|x_i\| \leq R_i}
\; \min_{\zeta \geq 0} \langle ( {G_\tau} + \zeta e e^T)^{-1} x, x \rangle  \\
& = \min_{{G_\tau} \in {\cal M}, \zeta \geq 0} \; \max_{x \in S,
\|x_i\| \leq R_i} \langle ({G_\tau}+\zeta e e^T)^{-1} x, x \rangle \\
& =   \min_{{G_\tau} \in {\cal M}, \zeta \geq 0} \; \max_{x \in S,
\|x_i\| \leq R_i} \langle ({G_\tau}+\zeta ee^T)^{-1} , x x^T \rangle\\
&=  \min_{{G_\tau} \in {\cal M}, \zeta \geq 0}  \;  \max_{X \in
{\cal X}} \langle ({G_\tau}+\zeta e e^T)^{-1} , X \rangle,
\end{align*}
where ${\cal X} = \{X: \; X \succeq 0, \;  \text{rank} X=1, \;
\langle e e^T, X \rangle =0, \; \langle X, E_{ii} \rangle \leq R_i^2
\;\; \forall i \}$  and  $E_{ii} = e_i e_i^T$. Using the well-known
relaxation from the SDP literature, we have:
\begin{align*}
\min_{p_{\cal N}} ({\cal R}(R))^2 \leq \min_{{G_\tau} \in {\cal M},
\zeta \geq 0}  \;\;  \max_{X \in {\cal X}_r} \langle ({G_\tau}+\zeta
e e^T)^{-1} , X \rangle,
\end{align*}
where ${\cal X}_r = \{X: X \succeq 0,  \langle e e^T, X \rangle =0,
\;  \langle X, E_{ii} \rangle \leq R_i^2 \;\; \forall i\}$, i.e. we
have removed the rank constraint: $\text{rank} X=1$. Then, the right hand side of the
previous optimization problem can be reformulated equivalently,
using Lagrange multipliers,  as follows:

\begin{align*}
& \hspace{2cm} \min_{{G_\tau} \in {\cal M}, \zeta \geq 0} \; \max_{X
\in {\cal X}_r} \langle (G_\tau + \zeta e e^T)^{-1}, X \rangle  \\
& = \min_{G_\tau \in {\cal M}, \zeta, \nu \geq 0,  Z \succeq 0,
\theta \in \rset} \;\;  \max_{X \in \rset^{N \times N}} \Big[
\langle (G_\tau + \zeta e e^T)^{-1} + Z + \theta e e^T, X \rangle \\
& \hspace{6.9cm} + \sum_{i=1}^N \nu_i (R_i^2 - \langle X, E_{ii}
\rangle) \Big ].
\end{align*}
where $\nu =[\nu_1 \cdots \nu_N]^T$.  Rearranging the terms, we can
write the previous convex problem equivalently:
\begin{align*}
& \min_{{G_\tau} \in {\cal M}, \theta \in \rset, Z \succeq 0, \zeta,
\nu \geq 0} \Big [ \sum_i \nu_i R_i^2 \\
& \hspace{2cm} +   \max_{X \in \rset^{N \times N}} \langle
({G_\tau}+\zeta e e^T)^{-1} + Z + \theta e e^T - \sum_i \nu_i
E_{ii}, X \rangle \Big]   \\
& \hspace{2cm} = \min_{(G_\tau, Z, \zeta,\nu, \theta) \in {\cal F}}
\sum_i \nu_i R_i^2,
\end{align*}
where the feasible set is described as: ${\cal F} =\big\{(G_\tau, Z,
\zeta,\nu, \theta): \; {G_\tau} \in {\cal M},  \theta \in \rset, Z
\succeq 0, \zeta,\nu \geq 0, ({G_\tau}+\zeta ee^T)^{-1} + Z + \theta
e e^T - \sum_i \nu_i E_{ii} = 0 \big\}$. Moreover, since $Z \succeq
0$, the feasible set can be rewritten as:
\[  \big\{ (G_\tau,\zeta,\nu,\theta): \; {G_\tau} \in {\cal M},
\theta \in \rset, \zeta,\nu \geq 0, \sum_i \nu_i E_{ii} -
({G_\tau}+\zeta e e^T)^{-1} - \theta e e^T \succeq 0 \big \}. \] We
observe that we can take $\theta=0$ and then we get the feasible
set:
\[  \big\{ (G_\tau,\zeta,\nu):  \; {G_\tau} \in {\cal M}, \;
\zeta,\nu \geq 0, \;  G_\tau + \zeta e e^T \succeq D_\nu^{-1}
\big\}.
\]

\noindent In conclusion, we obtain the following SDP:
\begin{align*}
\min_{p_{\cal N}} ({\cal R}(R))^2 \leq \min_{G_\tau \in {\cal M},
\zeta, \nu \geq 0, G_\tau + \zeta e e^T \succeq D_\nu^{-1} } \langle
\nu, R^2 \rangle.
\end{align*}
\noindent Finally, the SDP  \eqref{sdpopt} is obtained from   Schur
complement formula applied to the previous optimization problem.\qed
\end{proof}

\noindent  Since we assume a connected graph ${\cal G}$, we have
that $\lambda_1({G_\tau}) = 0$ is simple and consequently
$\lambda_2({G_\tau}) > 0$.  Then, the following equivalence holds:
\begin{align}
\label{equivalence_lambda2} {G_\tau} + t \frac{e e^T}{\|e\|^2}
\succeq t I_N \quad \text{if and only if} \quad t \leq
\lambda_2({G_\tau}),
\end{align}
since the spectrum of the matrix $ {G_\tau} + \zeta e e^T$ is $\{
\zeta \|e\|^2, \lambda_2({G_\tau}), \cdots, \lambda_N({G_\tau}) \}$.
It follows that  $\zeta = \frac{t}{\|e\|^2}$, $\nu_i = \frac{1}{t}$
for all $i$, and ${G_\tau}$ such that $t \leq \lambda_2({G_\tau})$
is feasible  for the SDP problem \eqref{sdpopt}. We conclude that:
\begin{align}
\label{convlambda2} \left({\cal R}^* (x^0) \right)^2 & \leq  \min_{{G_\tau}
\in {\cal M}, \zeta, \nu \geq 0, {G_\tau} + \zeta e e^T \succeq
D_\nu^{-1}} \langle \nu, R^2 \rangle \nonumber  \\
& \leq \min_{{G_\tau} \in {\cal M},  t \leq \lambda_2({G_\tau})}
\sum_{i=1}^N R^2_i \frac{1}{t} \leq  \frac{\sum_i
R^2_i}{\lambda_2({G_\tau})} \quad \forall {G_\tau} \in {\cal M}.
\end{align}

\noindent Then, according to Theorem \ref{th1} we obtain the
following upper bound on the  rate of convergence for the expected
values of the objective function in the smooth convex case:
\begin{equation}
\label{estimate3}
  \phi_k - f^* \le \frac{2 \sum_{i=1}^N R^2_i}{\lambda_2(G_\tau) \cdot k}
  \qquad \forall {G_\tau} \in {\cal M}.
\end{equation}

\noindent From the convergence rate for algorithm $RCD_\tau$ given
in \eqref{estimate3} it follows that we can choose the probabilities
such that we maximize the second eigenvalue of $G_\tau$:
\[ \max_{G_\tau \in {\cal M}} \lambda_2(G_\tau). \]

\noindent In conclusion, in order to find some suboptimal
probabilities $(p_{\cal N})_{{\cal N} \in {\cal P}_\tau}$, we can
solve the following simpler SDP problem than the one given in
\eqref{sdpopt}:

\begin{corollary}
From  \eqref{equivalence_lambda2} we get for the smooth case  that a
suboptimal choice of probabilities $(p_{\cal N})_{{\cal N} \in {\cal
P}_\tau}$ can be obtained as a solution of the   following  SDP
problem:
\begin{align}
\label{sdp2} & p_{\cal N}^*  = \arg \max_{t, G_\tau \in {\cal M}}
  \left \{t: \;\; G_\tau  \succeq t \left( I_N -
\frac{ e e^T}{\|e\|^2} \right) \right \}.
\end{align}
\end{corollary}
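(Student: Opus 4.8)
The plan is to show that the SDP \eqref{sdp2} is simply a reformulation of the problem $\max_{G_\tau \in {\cal M}} \lambda_2(G_\tau)$, which by \eqref{estimate3} is the correct quantity to optimize in order to tighten the convergence bound in the smooth case. The entire argument rests on the equivalence \eqref{equivalence_lambda2} established just above, so the work is not computational but conceptual: I would recognize that the linear matrix inequality appearing in \eqref{sdp2} encodes exactly the scalar condition $t \le \lambda_2(G_\tau)$, after which the maximization over $t$ produces the desired second-eigenvalue objective and the optimal probabilities are read off from the optimal matrix.

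First I would rearrange the constraint of \eqref{sdp2}. Moving terms in $G_\tau \succeq t\left(I_N - \frac{ee^T}{\|e\|^2}\right)$ gives the equivalent inequality $G_\tau + t\frac{ee^T}{\|e\|^2} \succeq t I_N$, which is precisely the left-hand side of the equivalence \eqref{equivalence_lambda2}. Hence the constraint holds if and only if $t \le \lambda_2(G_\tau)$, and the feasible set of \eqref{sdp2} is exactly $\{(t,G_\tau): G_\tau \in {\cal M},\ t \le \lambda_2(G_\tau)\}$. Maximizing $t$ over this set yields $\max_{G_\tau \in {\cal M}} \lambda_2(G_\tau)$, with the optimum attained at $t^* = \lambda_2(G_\tau^*)$. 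Since $G_\tau$ depends linearly on the probabilities and ${\cal M}$ is by definition the image of the simplex of distributions $(p_{\cal N})$, any optimal $G_\tau^*$ corresponds to a feasible choice $p_{\cal N}^*$, which is exactly the eigenvalue-maximizing distribution and therefore, by \eqref{estimate3}, gives the best guaranteed rate within this family.

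The only point that requires care, and the closest thing to an obstacle, is that \eqref{equivalence_lambda2} must remain valid uniformly as $G_\tau$ ranges over ${\cal M}$, since its derivation used that $\lambda_1(G_\tau)=0$ is simple with eigenvector $e$. I would dispatch this by recalling that each $G_\tau \in {\cal M}$ is a convex combination of the matrices $G_{\cal N}$ of \eqref{exp_GN}, so it is a positive semidefinite weighted Laplacian of the connected graph ${\cal G}$; thus $G_\tau e = 0$ and, by connectedness, $\lambda_2(G_\tau) > 0$. Consequently the spectrum of $G_\tau + t\frac{ee^T}{\|e\|^2}$ equals $\{t,\lambda_2(G_\tau),\dots,\lambda_N(G_\tau)\}$ for every feasible $G_\tau$, so the reformulation holds throughout ${\cal M}$ and the corollary follows.
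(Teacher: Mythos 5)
Your proposal is correct and matches the paper's own (implicit) justification: the paper likewise obtains \eqref{sdp2} by noting from \eqref{estimate3} that one should maximize $\lambda_2(G_\tau)$ over ${\cal M}$ and then using the equivalence \eqref{equivalence_lambda2} to rewrite the constraint $t \leq \lambda_2(G_\tau)$ as the LMI $G_\tau + t\frac{ee^T}{\|e\|^2} \succeq t I_N$, i.e. $G_\tau \succeq t\left(I_N - \frac{ee^T}{\|e\|^2}\right)$. Your additional check that every $G_\tau \in {\cal M}$ is a positive semidefinite weighted Laplacian with $G_\tau e = 0$, so the spectral argument applies uniformly over ${\cal M}$, is a welcome (if minor) tightening of what the paper leaves implicit.
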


\noindent Note that the matrices on both sides of the LMI from
\eqref{sdp2} have the common eigenvalue zero associated to the
eigenvector $e$, so that this LMI has empty interior which can cause
problems for some classes of interior point methods. We can overcome
this problem by replacing the LMI constraint in \eqref{sdp2} with
the following equivalent LMI:
\[  G_\tau + \frac{e e^T}{\|e\|^2} \succeq t \left( I_N -
\frac{e e^T}{\|e\|^2} \right).  \]

\noindent Finally, when the functions $f_i$ are $\sigma_i$-strongly
convex, from Theorem \ref{th3sc} and the LMI \eqref{eq_scparam} it
follows  that in order to get a better convergence rate  we need to
search for $\sigma_{G_\tau}$ as large as possible. Therefore, we get
the following result:

\begin{corollary}
For the strongly convex case  the optimal probabilities are chosen
as the solution of the following SDP problem:
\begin{align}
\label{sdpsc} p_{\cal N}^*  =   \arg & \max_{ \sigma_{G_\tau}, \zeta \geq 0,
G_\tau \in {\cal M}} \left\{ \sigma_{G_\tau} :  \quad \sigma_{G_\tau} I_N \preceq D_\sigma^{1/2} (G_\tau + \zeta e e^T) D_\sigma^{1/2}  \right \}.
\end{align}
\end{corollary}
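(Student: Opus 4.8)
The plan is to read off the optimal probabilities directly from the linear rate of Theorem~\ref{th3sc}, in complete analogy with the smooth-case corollary that maximized $\lambda_2(G_\tau)$ starting from \eqref{estimate3}. The estimate $\phi_k - f^* \le (1-\sigma_{G_\tau})^k(f(x^0)-f^*)$ shows that the contraction factor $1-\sigma_{G_\tau}$ is strictly decreasing in $\sigma_{G_\tau}$, so the sharpest guaranteed linear rate is obtained by choosing the probabilities that make the strong convexity parameter $\sigma_{G_\tau}$ as large as possible. Since the probabilities enter the algorithm only through $G_\tau = \sum_{\cal N \in {\cal P}_\tau} p_{\cal N} G_{\cal N}$, that is, through the membership $G_\tau \in {\cal M}$, maximizing $\sigma_{G_\tau}$ over admissible probability vectors is the same as maximizing it over $G_\tau \in {\cal M}$.

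First I would invoke the characterization of $\sigma_{G_\tau}$ already derived before \eqref{eq_scparam}. When each $f_i$ is $\sigma_i$-strongly convex, $f$ is strongly convex with parameter one in the norm $\|\cdot\|_{D_\sigma}$, and the largest $\sigma_{G_\tau}$ for which $f$ remains $\sigma_{G_\tau}$-strongly convex in the norm $\|\cdot\|_{G_\tau}^*$ on $S$ is exactly the largest $\sigma_{G_\tau}$ admitting some $\zeta \ge 0$ with $\sigma_{G_\tau} D_\sigma^{-1} \preceq G_\tau + \zeta e e^T$. Applying the congruence by the nonsingular matrix $D_\sigma^{1/2}$ on both sides, which preserves the semidefinite order, turns this into the LMI \eqref{eq_scparam}, namely $\sigma_{G_\tau} I_N \preceq D_\sigma^{1/2}(G_\tau + \zeta e e^T)D_\sigma^{1/2}$.

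It then remains only to assemble these facts into a single optimization problem. Treating $(\sigma_{G_\tau}, \zeta, G_\tau)$ as decision variables subject to $\zeta \ge 0$, $G_\tau \in {\cal M}$ and the LMI \eqref{eq_scparam}, the objective of maximizing $\sigma_{G_\tau}$ is linear, while the constraint is a single affine matrix inequality: the map $(\sigma_{G_\tau},\zeta,G_\tau) \mapsto D_\sigma^{1/2}(G_\tau+\zeta e e^T)D_\sigma^{1/2} - \sigma_{G_\tau} I_N$ is affine in all its arguments, because $G_\tau$ depends linearly on the $p_{\cal N}$, $\zeta$ multiplies the fixed matrix $e e^T$, and $\sigma_{G_\tau}$ multiplies $I_N$. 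Hence the feasible region is a spectrahedron and the whole problem is a genuine SDP, which is precisely \eqref{sdpsc}; its maximizer returns the probabilities giving the best guaranteed linear rate.

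The one delicate point, and the main obstacle, is to justify that \eqref{eq_scparam} pins down the \emph{largest} admissible $\sigma_{G_\tau}$ rather than merely some feasible value. This rests on the two-sided implication, sketched before \eqref{eq_scparam}, between $f$ being $\sigma_{G_\tau}$-strongly convex in $\|\cdot\|_{G_\tau}^*$ on $S$ and the solvability of $\sigma_{G_\tau} D_\sigma^{-1} \preceq G_\tau + \zeta e e^T$ for some $\zeta \ge 0$. Once both directions of this equivalence are in place, optimizing jointly over $\zeta$ and $G_\tau \in {\cal M}$ inside the SDP automatically delivers the sharpest convexity parameter compatible with each probability choice, and the overall maximum over $G_\tau \in {\cal M}$ then yields the sought optimal probabilities $p_{\cal N}^*$.
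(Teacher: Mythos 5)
Your argument is correct and follows essentially the same route as the paper, which justifies this corollary in a single sentence: by Theorem~\ref{th3sc} the rate improves as $\sigma_{G_\tau}$ grows, and the LMI \eqref{eq_scparam} (obtained by congruence with $D_\sigma^{1/2}$) is the admissibility condition, so one maximizes $\sigma_{G_\tau}$ over $\zeta \geq 0$ and $G_\tau \in {\cal M}$. The ``delicate point'' you raise about the LMI capturing the largest admissible $\sigma_{G_\tau}$ is not addressed by the paper either, and is not strictly needed: the SDP simply selects the probabilities with the best rate certifiable through \eqref{eq_scparam}.
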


\noindent In \cite{XiaBoy:06}, the authors propose a (center-free)
distributed  scaled gradient method in the form \eqref{center_free}
to solve the separable optimization problem \eqref{scpp} with
strongly convex objective function, where at each iteration the full
gradient needs to be computed. A similar rate of convergence is
obtained as in Theorem \ref{th3sc} under the Lipschitz and  strong
convexity assumption on $f$, where the weights are designed by
solving an SDP in the form \eqref{sdpsc}. Our randomized algorithm
also belongs to this class of methods and  for $\tau=N$ we recover a
version of the method  in \cite{XiaBoy:06}. Moreover, our
convergence analysis covers  the smooth case, i.e. without the
strong convexity assumption.

%%%%%%%%%%%%%%%%%%%%%%%%%%%%%%%%%%%%%%%%%%%%%%%%%%%%
%%%%%%%%%%%%%%%%%%%%%%%%%%%%%%%%%%%%%%%%%%%%%%%%%%%%%

\section{Applications}
\label{sec_app} \noindent Problem \eqref{scpp} arises in many real
applications, e.g. image processing \cite{BauBor:96,Com:96,Wri:10},
resource allocation \cite{Nec:13,XiaBoy:06} and coordination in
multi-agent systems \cite{Nec:13,YouXie:11}. For example, we can
interpret \eqref{scpp} as $N$ agents exchanging $n$ goods to
minimize a total cost, where the constraint $\sum_i x_i = 0$ is the
equilibrium or market clearing constraint. In this context $[x_i]_j
\geq  0$ means that agent $i$ receives $[x_i]_j$ of good $j$ from
exchange and $[x_i]_j < 0$ means that agent $i$ contributes
$|(x_i)_j|$ of good $j$ to exchange. It can be also viewed as the
distributed dynamic energy management problem: $N$ devices exchange
power in time periods $t = 1, \cdots, n$. Furthermore,  $x_i \in
\rset^n$  is the power flow profile for device $i$ and $f_i(x_i)$ is
the cost of profile $x_i$ (and usually encodes constraints). In this
application the constraint $\sum_i x_i = 0$ represents the energy
balance (in each time period).

\noindent Problem \eqref{scpp}  can also be seen as the dual  corresponding to an
optimization of a sum of convex functions. Consider the following primal
convex optimization problem that arises in many engineering
applications:
\begin{equation}
\label{dual_scp} g^* =  \min_{v \in \cap_{i=1}^N Q_i }   g_1(v)+
\cdots + g_N(v),
\end{equation}
where $g_i$ are all $\sigma_i$-strongly convex functions and $Q_i \subseteq \rset^n$
are convex sets. Denote with $v^*$ the unique optimal solution of
problem \eqref{dual_scp}. This problem can be reformulated as:
\begin{equation*}
 \min_{u_i \in Q_i, u_i = v \; \forall i \in [N]}   g_1(u_1)+  \cdots +
 g_N(u_N).
\end{equation*}
Let us define $u=[u_1^T  \cdots u_N^T]^T$  and $g(u) = g_1(u_1)+
\cdots + g_N(u_N)$.  By duality, using the Lagrange multipliers
$x_i$ for the constraints $u_i=v$, we obtain the equivalent  convex
problem \eqref{scpp}, where $f_i(x_i) = \tilde g_i^*(x_i)$ and
$\tilde g_i^*$ is the convex conjugate of the function $\tilde g_i =
g_i + 1_{Q_i}$, i.e.
\begin{align}
\label{defdualfi}
f_i(x_i) =
\max_{u_i \in Q_i} \langle x_i, u_i \rangle - g_i(u_i) \qquad
\forall i.
\end{align}
Further we have $f^* + g^*=0$.  Note that if $g_i$ is
$\sigma_i$-strongly convex, then the convex conjugate $f_i$ is
well-defined and has Lipschitz continuous gradient with constants
$L_i = \frac{1}{\sigma_i}$ (see \cite{Nes:04}), so that Assumption \ref{ass} holds. A
particular application is the problem of finding the projection of a
point $v_0$ in the intersection of the convex sets $\cap_{i=1}^N
Q_i \subseteq \rset^n$. This problem can be written as an optimization problem in the
form:
\begin{equation*}
 \min_{v \in \cap_{i=1}^N Q_i }   p_1  \| v-v_0 \|^2 + \cdots + p_N  \| v-v_0 \|^2,
\end{equation*}
where $p_i>0$ such that $\sum_i p_i=1$. This is a particular case of
the separable problem \eqref{dual_scp}. Note that since the
functions $g_i(v) = p_i \| v-v_0 \|^2 $ are strongly convex, then
$f_i$ have Lipschitz continuous gradient with constants
$L_i = 1/p_i$ for all $i \in [N]$.

\noindent  We now show how we can  recover an approximate primal
solution for the primal problem \eqref{dual_scp} by solving the corresponding dual
problem \eqref{scpp} with algorithm $RCD_\tau$. Let us define for
any dual variable $x_i$ the primal variable:
\[  u_i(x_i) = \arg \min_{u_i \in Q_i} g_i(u_i) - \langle x_i, u_i \rangle \quad \forall
i \in [N]. \]
Let us  define $\hat v^* = e \otimes v^*$, where $\otimes$ is the  Kronecker
product,  and the norm $\|u\|^2_{D_\sigma} = \sum_i \sigma_i \|u_i\|^2$. Moreover, let $\sigma_{\min} = \min_i \sigma_i$ and  $\lambda_N$  the largest eigenvalue of $G_\tau$.  Furthermore, for simplicity of the presentation we consider the initial starting point $x^0=0$. Then, we can derive convergence estimates on primal infeasibility and suboptimality for \eqref{dual_scp}.

\begin{theorem}
\label{th_primal} For the convex optimization problem
\eqref{dual_scp} we assume  that all functions $g_i$ are
$\sigma_i$-strongly convex. Let $x^k$ be the sequence generated by
algorithm $RCD_\tau$  for solving the corresponding dual problem
\eqref{scpp} and the primal sequence $u^k = u(x^k)$. Then, we have
the following convergence estimates in the expected values  on
primal infeasibility and suboptimality:
\begin{align*}
& E  \left[ \|u^k - \hat v^*\|_{D_\sigma}^2 \right]  \leq \frac{4 R^2(x^0)}{k} \quad \text{and} \quad E  \left[ |g(u^k) - g^*| \right] \leq   \frac{4 R^2(x^0) \lambda_{N}}{\sigma_{\min} \sqrt{k}}.
\end{align*}
\end{theorem}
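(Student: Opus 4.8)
The plan is to exploit the Fenchel duality between the primal problem \eqref{dual_scp} and the dual \eqref{scpp}, together with the identity $\nabla f_i(x_i) = u_i(x_i)$ obtained by differentiating the conjugate \eqref{defdualfi}. Consequently the recovered primal point satisfies $u^k = \nabla f(x^k)$, while the optimality conditions for \eqref{scpp} force $\nabla f_i(x_i^*) = \nabla f_j(x_j^*)$ for all $i,j$; by strong duality this common value equals $v^*$, so that $\hat v^* = \nabla f(x^*)$ for any dual optimum $x^*$. This reinterpretation is the mechanism that converts primal estimates into statements about gradients of $f$, and hence about the dual suboptimality controlled by Theorem~\ref{th1}.

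For the first estimate (infeasibility), since each $g_i$ is $\sigma_i$-strongly convex, $f_i$ has $L_i = 1/\sigma_i$-Lipschitz gradient, and I would invoke the standard co-coercivity lower bound for smooth convex functions, $f_i(x_i^k) \ge f_i(x_i^*) + \langle \nabla f_i(x_i^*), x_i^k - x_i^*\rangle + \frac{1}{2 L_i}\|\nabla f_i(x_i^k) - \nabla f_i(x_i^*)\|^2$. Summing over $i$ and using $\nabla f(x^*) = \hat v^*$ together with $x^k,x^*\in S$, the first-order term $\langle \hat v^*, x^k - x^*\rangle = \langle v^*, \sum_i (x_i^k - x_i^*)\rangle$ vanishes, leaving $\|u^k - \hat v^*\|_{D_\sigma}^2 \le 2\left(f(x^k) - f^*\right)$. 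Taking expectations and applying Theorem~\ref{th1} gives $E[\|u^k - \hat v^*\|_{D_\sigma}^2] \le 2(\phi_k - f^*) \le 4{\cal R}^2(x^0)/k$, which is exactly the first claim; this part is essentially immediate once the vanishing of the first-order term is observed.

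For the second estimate (suboptimality), I would start from the Fenchel equality at the maximizer in \eqref{defdualfi}, namely $g_i(u_i^k) = \langle x_i^k, u_i^k\rangle - f_i(x_i^k)$, summing to $g(u^k) = \langle x^k, u^k\rangle - f(x^k)$. Since $g^* = -f^*$ and $x^k\in S$ makes $\langle x^k, \hat v^*\rangle = 0$, this rearranges to $g(u^k) - g^* = \langle x^k, u^k - \hat v^*\rangle - (f(x^k) - f^*)$, hence $|g(u^k) - g^*| \le |\langle x^k, u^k - \hat v^*\rangle| + (f(x^k) - f^*)$. The second term is $O(1/k)$; the dominant term is the inner product, which I would control by Cauchy--Schwarz in the $G_\tau$ norm, $|\langle x^k, u^k - \hat v^*\rangle| \le \|x^k\|_{G_\tau}^* \, \|u^k - \hat v^*\|_{G_\tau}$, and then convert the second factor via $\|w\|_{G_\tau}^2 \le \lambda_N \|w\|_2^2 \le (\lambda_N/\sigma_{\min})\|w\|_{D_\sigma}^2$, which with the first estimate yields $\|u^k - \hat v^*\|_{G_\tau} \le \sqrt{2\lambda_N/\sigma_{\min}}\,\sqrt{f(x^k) - f^*}$. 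Using that each realization is monotone (the per-step decrease involves $G_{\cal N}\succeq 0$), every iterate stays in the level set, so $\|x^k\|_{G_\tau}^*$ is bounded by a constant multiple of ${\cal R}(x^0)$, the choice $x^0=0$ being used to turn the level-set distance into a norm bound. I would then take expectations, factor out the deterministic bound on $\|x^k\|_{G_\tau}^*$, apply Jensen's inequality $E[\sqrt{f(x^k)-f^*}] \le \sqrt{\phi_k - f^*}$, and invoke Theorem~\ref{th1}.

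The main obstacle is this second estimate. Two points need care: first, bounding $\|x^k\|_{G_\tau}^*$ uniformly over realizations, since ${\cal R}(x^0)$ bounds $\min_{x^*}\|x^k - x^*\|_{G_\tau}^*$ rather than $\|x^k\|_{G_\tau}^*$ itself, so one must use $x^0=0$ with a triangle inequality and, if $X^*$ is not a singleton, argue that the relevant optima have controlled norm; second, the concavity/Jensen step, which is legitimate precisely because the only randomness left in the product bound is carried by $\sqrt{f(x^k)-f^*}$. The norm-conversion constants produce the factor $\lambda_N/\sigma_{\min}$ reported in the statement, and multiplying a constant factor by $E[\sqrt{f(x^k)-f^*}] = O(1/\sqrt{k})$ is exactly what degrades the suboptimality rate from $1/k$ to $1/\sqrt{k}$.
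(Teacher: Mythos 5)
Your proposal is correct and follows essentially the same route as the paper: the key bound $\|u^k-\hat v^*\|_{D_\sigma}^2 \le 2(f(x^k)-f^*)$ (which the paper obtains from $1$-strong convexity of $u\mapsto\sum_i g_i(u_i)-\langle x_i,u_i\rangle$ at its minimizer $u(x)$ --- the exact conjugate-dual form of your co-coercivity argument), then Cauchy--Schwarz in the $\|\cdot\|_{G_\tau}$/$\|\cdot\|_{G_\tau}^*$ pairing, the level-set bound $\|x^k\|_{G_\tau}^*\le 2{\cal R}(x^0)$ using $x^0=0$, and Jensen applied to $\sqrt{f(x^k)-f^*}$. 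The only deviations are cosmetic: your single signed identity $g(u^k)-g^*=\langle x^k,u^k-\hat v^*\rangle-(f(x^k)-f^*)$ replaces the paper's two one-sided bounds (pairing with $x^*$ for the lower bound and with $x^k$ for the upper), which leaves an extra additive ${\cal O}(1/k)$ term, and your norm conversion yields the factor $\sqrt{\lambda_N/\sigma_{\min}}$ where the paper writes $\lambda_N/\sigma_{\min}$ --- both differences affect only the constants, not the ${\cal O}(1/\sqrt{k})$ rate.
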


\begin{proof}
Since all the functions $g_i$ are $\sigma_i$-strongly convex, then
the objective function $\sum_{i=1}^N g_i(u_i) - \langle x_i ,
u_i\rangle$ is also $1$-strongly convex in the variable $u$ w.r.t.
the norm $\|u\|^2_{D_\sigma} = \sum_i \sigma_i \|u_i\|^2$. Using
this property  and the expression of $u_i(x_i)$, we obtain the
following inequalities:
\begin{align}
\label{strongL}  \frac{1}{2} \|u(x) & -  \hat v^*\|_{D_\sigma}^2  =
\sum_{i=1}^N \frac{\sigma_i}{2}\|u_i(x_i) - v^*\|^2 \nonumber\\
&\leq \left(\sum_{i=1}^N g_i(v^*) - \langle x_i, v^*\rangle \right)  - \left( \sum_{i=1}^N g_i(u_i(x_i)) - \langle x_i , u_i(x_i)\rangle \right) \\
& = (-f^*) - (- f(x)) = f(x) - f^* \quad \forall x \in S. \nonumber
\end{align}
Now, let us consider for $x$ the sequence $x^k$ generated by
algorithm $RCD_\tau$ and let $u^k=u(x^k)$. We note that $u_i^{k+1} = u_i^k$ for all $i \in [N] \setminus {\cal N}_k$ and $u_i^{k+1} = u_i(x_i^{k+1})$ for all $i \in  {\cal N}_k$. Taking expectation over the entire history $\eta^k$  and using Theorem \ref{th1}, we get an estimate on primal infeasibility:
\[ E[\|u^k - \hat v^*\|_{D_\sigma}^2] \!\leq\! 2 E[f(x^k) - f^*]
\!=\! 2 (\phi_k - f^*) \!\leq\! \frac{4 R^2(x^0)}{k}.  \]

\noindent Moreover, for deriving estimates on primal suboptimality, we first observe:
\[  \|u\|_{G_\tau} \leq \frac{\lambda_{N}}{\sigma_{\min}} \|u\|_{D_\sigma} \quad
\forall u, \]
and combining with \eqref{strongL} we get:
\begin{align}
\label{strongLG}
\|u(x)  -  \hat v^*\|_{G_\tau} \leq \frac{\lambda_{N} \sqrt{2(f(x) - f^*)}}{\sigma_{\min}}.
\end{align}

\noindent For the left hand side suboptimality, we proceed as follows:
\begin{align*}
f(x^*) & =  \langle x^*, \hat v^* \rangle - g(\hat v^*)  \\
&  = \max_{u_i \in Q_i}  \langle x^*, u  \rangle - g(u) \geq \langle x^*, u(x) \rangle - g(u(x)),
\end{align*}
which leads to the following relation:
\begin{align}
\label{ineq_optl} g(u&(x)) - g^*  \geq \langle x^*,  u(x) - \hat v^* \rangle  \geq - \|x^*\|_{G_\tau}^* \|\hat v^* - u(x) \|_{G_\tau} \nonumber \\
& \overset{\eqref{strongLG}}{\geq} - \|x^*\|_{G_\tau}^*  \frac{\lambda_N \sqrt{2(f(x) - f^*)}}{\sigma_{\min}} \quad \forall x \in S.
\end{align}
Secondly, from the definition of the dual function, we have:
\[ f(x) = \langle x, u(x) \rangle -  g(u(x)) \quad \forall x \in S. \]
Subtracting $f^* = f(x^*)$ from both sides
and using the complementarity condition $ \langle x^*, u(x^*) \rangle =0$, where $u(x^*) = \hat v^*$,  we get the following relations:
\begin{align*}
g  (u(x))  - g^* & =  \langle x, u(x) \rangle - f(x) + f^* \\
& = f(x^*) - f(x) + \langle x - x^*, u(x^*) \rangle + \langle x, u(x) - u(x^*) \rangle \\
& = f(x^*) + \langle x - x^*, \nabla f(x^*) \rangle - f(x) + \langle x, u(x) - u(x^*) \rangle \\
& \leq \langle x, u(x) - u(x^*) \rangle \leq \| x \|_{G_\tau}^* \|\hat v^* - u(x)\|_{G_\tau} \\
& \leq \left( \|x - x^*\|_{G_\tau}^* + \| x^*\|_{G_\tau}^* \right) \| u(x) - \hat v^* \|_{G_\tau} \\
& \overset{\eqref{strongLG}}{\leq}   \left( \|x - x^*\|_{G_\tau}^* + \| x^*\|_{G_\tau}^* \right) \frac{\lambda_N \sqrt{2(f(x) - f^*)}}{\sigma_{\min}},
\end{align*}
valid for all $x \in S$  and $x^* \in X^*$, where in the first inequality
we used convexity of the function $f$ and  the
relation $\nabla f(x) = u(x)$,  and in the second inequality the Cauchy-Schwartz inequality. Now, using the definition of $R(x^0)$ and that $x^0=0$, replacing $x$ with  the sequence $x^k$ in the previous derivations and taking the expectation over the entire history $\eta^k$, we obtain a bound on primal suboptimality:
\begin{align*}
 E[|g(u^k) - g^*|] & \leq   E \left[ \left( \|x^k - x^*\|_{G_\tau}^* + \| x^*\|_{G_\tau}^* \right) \frac{\lambda_N \sqrt{2(f(x^k) - f^*)}}{\sigma_{\min}} \right] \\
& \leq \frac{2 R(x^0) \lambda_N}{\sigma_{\min}}    \sqrt{2E[f(x^k) - f^*]}  \leq  \frac{4 R^2(x^0) \lambda_N}{\sigma_{\min} \sqrt{k}},
\end{align*}
which gives us a convergence   estimate for primal suboptimality for  problem \eqref{dual_scp}.  \qed
\end{proof}

\noindent In conclusion, the expected values of the distance between
the primal generated points $u_i^k \in Q_i$ and the unique optimal
point $v^* \in \cap_i Q_i$ of \eqref{dual_scp},  i.e. $E[\|u^k_i -
v^*\|]$, is less than ${\cal O}(\frac{1}{\sqrt{k}})$. Similar
convergence rates for other projection algorithms   for solving
convex feasibility problems have been derived in the literature, see
e.g. \cite{BauBor:96, Com:96}.

%%%%%%%%%%%%%%%%%%%%%%%%%%%%%%%%%%%%%%%%%%%%%%%%%%%%%%%%%%%%%%%%%%%%%%%%%%%%5
%%%%%%%%%%%%%%%%%%%%%%%%%%%%%%%%%%%%%%%%%%%%%%%%%%%%%%%%%%%%%%%%%%%%%%%%%%%%

\section{Numerical experiments}
\label{sec_num_simulations}
\noindent In this section we report some preliminary numerical results on
solving the optimization problem \eqref{scpp}, where the functions $f_i$ are taken as in paper  \cite{XiaBoy:06}:
\begin{align}
\label{apl1}
 f_i(x_i) = \frac{1}{2} a_i(x_i -
c_i)^2 + \log(1 + \exp(b_i(x_i - d_i))) \qquad \forall i \in [N],
\end{align}
where the coefficients $a_i \geq 0, b_i, c_i$ and $d_i$ are
generated randomly with uniform distributions on $[-15, \ 15]$. The
second derivatives of these functions have the following
expressions:
\[ f_i^{''} (x_i) =  a_i + \frac{b_i^2 \exp(b_i(x_i-d_i))}{(1 + \exp(b_i(x_i-d_i)))^2},  \]
which have the following lower and upper bounds:
\[ \sigma_i = a_i \quad \text{and} \quad  L_i = a_i + \frac{1}{4} b_i^2.  \]
We assume that the sum of the variables if fixed to zero, i.e.:
\[\sum_{i=1}^N  x_i=0.  \]
In all our numerical tests we consider a complete graph  and Lipschitz dependent probabilities as in \eqref{probinv} (if not specified otherwise).

\noindent In the first set of experiments, we solve  a single
randomly generated  problem with $N=10^4$ nodes for $\tau  = 2, 4$
and $7$ cores in parallel using MPI. The Fig. 1 displays the
evolution of $f(x^k)- f^*$ along normalized iterations $k/N$ of
algorithm $RCD_\tau$. From the plot we can observe that increasing
the number of cores reduces substantially the number of full
iterations $k/N$.
\begin{figure}[ht]
\label{diferent_tau} \caption{Typical performance of algorithm
$RCD_\tau$ for different numbers of updated variables (processors)
$\tau$: evolution of $f(x^k)- f^*$ along normalized iterations
$k/N$, with $\tau=2, 4$ and 7.}
\centerline{\includegraphics[height=5cm,width=9cm]{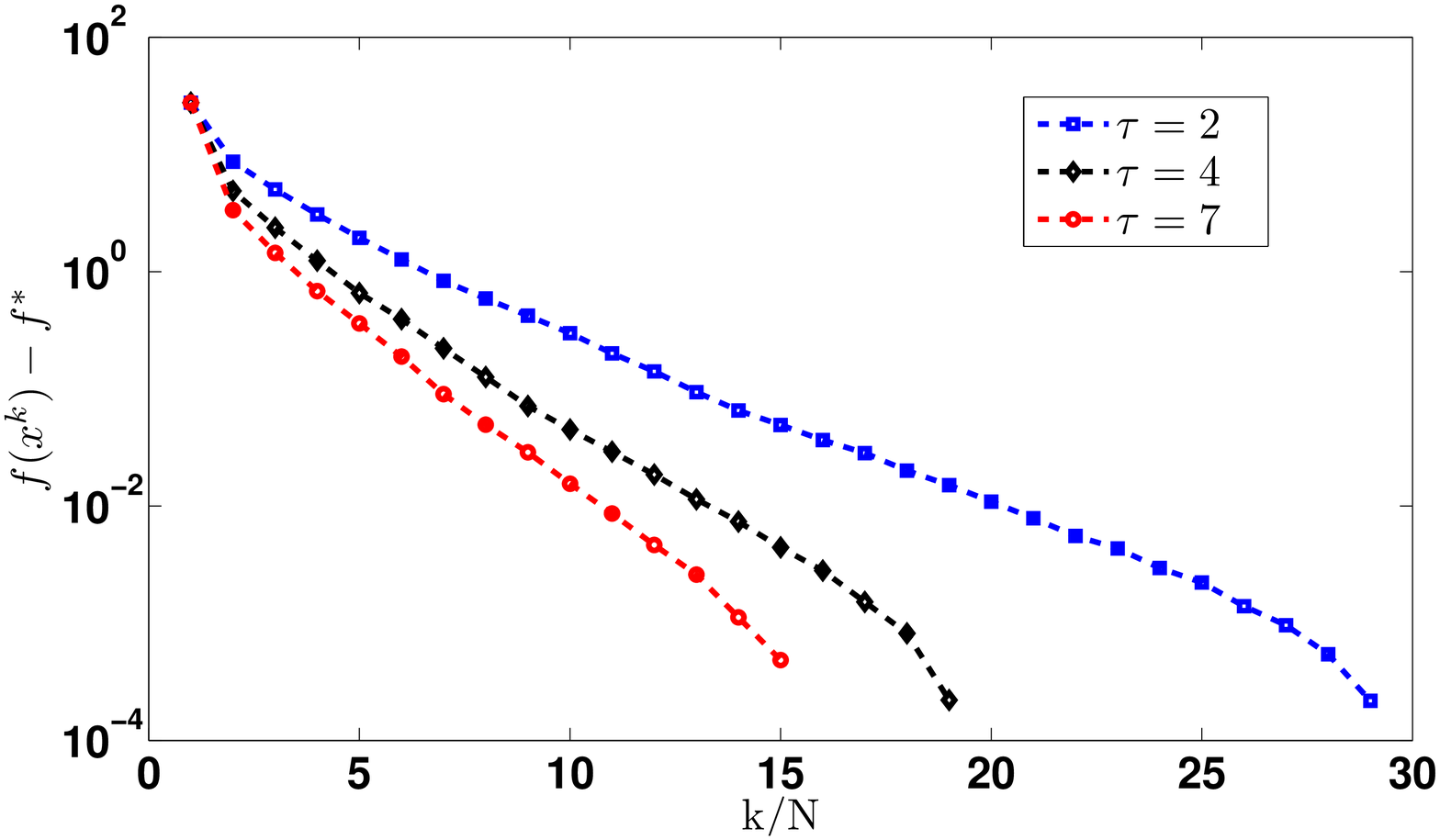}}
\end{figure}

\noindent Then, we tested algorithm $RCD_2$,  i.e. $\tau=2$, and
thus  at each iteration we choose a pair of nodes in the graph
$(i,j) \in E$ with probability $p_{ij}$ and then update only the
components $i$ and $j$ of $x$ as follows:
\[ x^{+}_i \!= x_i + \frac{1}{L_{i} \!+\! L_{j}} \! \left( \nabla f_{j}(x_{j})
\!-\! \nabla f_{i}(x_{i}) \right), \quad x^{+}_j \!= x_j +
\frac{1}{L_{i} \!+\! L_{j}} \! \left( \nabla f_{i}(x_{i}) \!-\!
\nabla f_{j}(x_{j}) \right).
\]
We consider three choices of the probabilities in Fig. 2 for
algorithm $RCD_2$: uniform probability, probabilities depending on
Lipschitz constants as given in \eqref{probinv} and optimal
probabilities obtained from solving the SDP  \eqref{sdp2}. As we
expected, the method based on choosing the optimal probabilities has
the fastest convergence.
\begin{figure}[ht]
\label{different_probab} \caption{Evolution of $f(x^k) - f^*$ along
full iterations for algorithm  $RCD_2$ for  different choices for
the probability: uniform probabilities, Lipschitz dependent
probabilities as given in \eqref{probinv} and optimal probabilities
obtained from solving  the SDP problem \eqref{sdp2}.}
\centerline{\includegraphics[height=5cm,width=9cm]{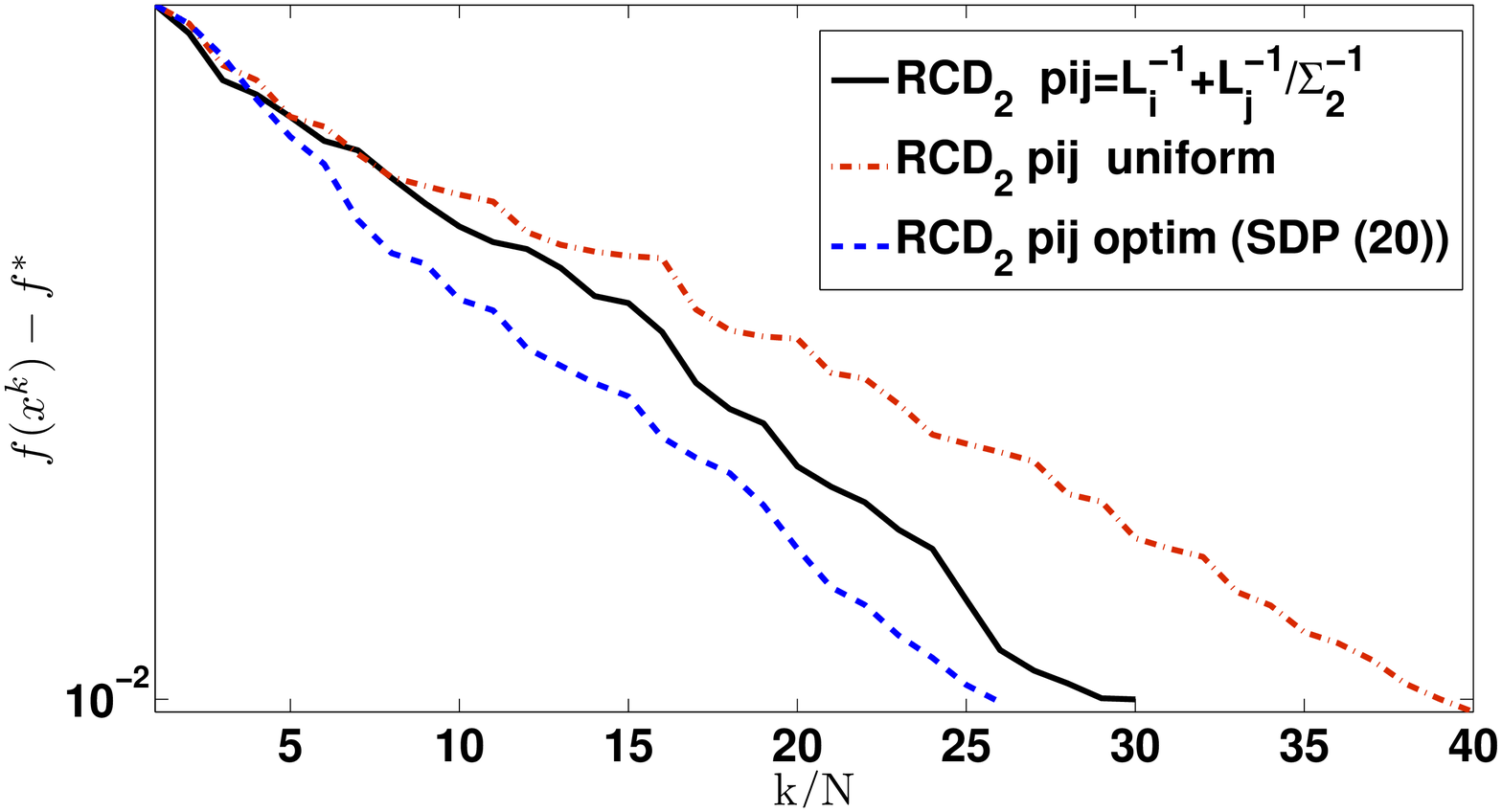}}
\end{figure}

\begin{figure}[ht]
\label{different_methods} \caption{ Evolution of $f(x^k) - f^*$
along full iterations   for the methods:  projected gradient,
center-free gradient  method  with Metropolis weights
\cite{XiaBoy:06} and algorithm  $RCD_\tau$ with uniform and
Lipschitz dependent probabilities for $\tau=2$.}
\centerline{\includegraphics[height=5cm,
width=9cm]{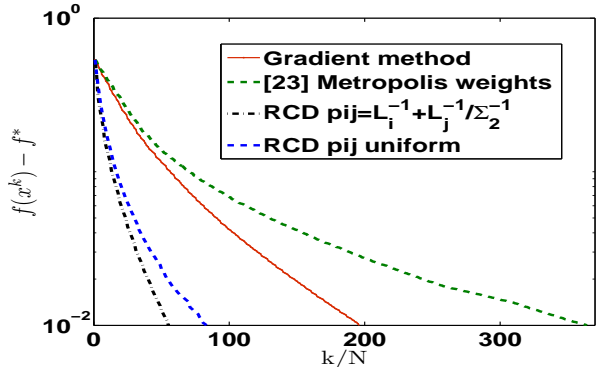}}
\end{figure}

\vspace{0.1cm}

\noindent Finally, we compare our algorithm $RCD_2$, i.e. $\tau=2$,
for two choices for the probabilities $p_{ij}$ (uniform and
Lipschitz dependent probabilities \eqref{probinv}) with the full
gradient method and the center-free gradient method with Metropolis
weights proposed in \cite{XiaBoy:06}. The global Lipschitz constant
in the full gradient is taken as $L_{\max} = \max_i L_i$. Note that
the computations of the local Lipschitz constant $L_i$ required by
algorithm $RCD_\tau$ can be done locally in each node for the
corresponding  function $f_i$, while computing the global Lipschitz
constant $L_{\max}$ for projected gradient method on problems of
very large dimension is difficult. We have also implemented the
center-free gradient  method with Metropolis weights from
\cite{XiaBoy:06}:
\[ x_i^{+} = x_i + \sum_{j \in {\cal N}_i} w_{ij} (\nabla f_j(x_j) -
\nabla f_i(x_i)) \quad \forall i \in [N], \] where ${\cal N}_i$ are
the neighbors of node $i$ in the graph and the weights satisfy the
relation: $\sum_{j \in {\cal N}_i} w_{ij} =0$. In Fig. 3 we plot the
evolution of $f(x^k) - f^*$ along full iterations $k/N$ for the
following methods: the center-free gradient algorithm from
\cite{XiaBoy:06} with the Metropolis weights, the full projected
gradient algorithm and the $RCD_2$ algorithm with uniform and
Lipschitz dependent probabilities. We clearly see that the best
accuracy is achieved by the $RCD_2$ algorithm with Lipschitz
dependent probabilities.

%%%%%%%%%%%%%%%%%%%%%%%%%%%%%%%%%%%%%%%%%%%%%%%%%%%%%%%%%%%%%%%%%%%%%%%%%%%%%
%%%%%%%%%%%%%%%%%%%%%%%%%%%%%%%%%%%%%%%%%%%%%%%%%%%%%%%%%%%%%%%%%%%%%%%%%%%%

\section{Conclusions}
\noindent In this paper we have derived  parallel random coordinate
descent methods for minimizing linearly constrained convex problems
over networks.  Since we have coupled constraints in the problem, we
have devised an algorithm that updates in parallel $\tau \geq 2$
(block) components per iteration. We have proved that for this
method  we obtain in expectation an $\epsilon$-accurate solution in
at most $\mathcal{O}(\frac{N}{\tau \epsilon})$ iterations and thus
the convergence rate depends linearly on the number of (block)
components to be updated. Preliminary numerical results  show that
the parallel  coordinate  descent method with $\tau>2$  accelerates
on its basic counterpart  corresponding to $\tau=2$. For strongly
convex functions the new method converges linearly. We have also
provided SDP formulations that enable us to  choose the
probabilities in an optimal fashion.

%\section{Future work}
%\begin{itemize}
%\item  Strong convexity can be replaced by a weak nondegeneracy assumption.
%
%\item Derive an accelerated version of coordinate descent for this model.
%\end{itemize}

%%%%%%%%%%%%%%%%%%%%%%%%%%%%%%%%%%%%%%%%%%%%%%%%%%%%%%%%%%%%%%%%%%%%%55
%%%%%%%%%%%%%%%%%%%%%%%%%%%%%%%%%%%%%%%%%%%%%%%%%%%%%%%%%%%%%%%%%%%%5


\begin{thebibliography}{30}

\bibitem{Bec:12}
A. Beck, \emph{The 2-coordinate descent method for solving
double-sided simplex constrained minimization problems}, Journal of
Optimization Theory and Applications, 162(3),  892--919, 2014.

\bibitem{BecTet:12}
A. Beck and L. Tetruashvili,  \emph{On the convergence of block
coordinate descent type methods}, SIAM Journal on Optimization,
23(4), 2037--2060, 2013.

\bibitem{BauBor:96} H. Bauschke, J.M. Borwein,   \emph{On projection
algorithms for solving convex feasibility problems},
SIAM Review, 38(3), 367--426, 1996.

\bibitem{Com:96}
P. L. Combettes, \emph{The convex feasibility problem in image
recovery}, in Advances in Imaging and Electron Physics (P. Hawkes
Ed.),  95, 155--270, Academic Press, 1996.

\bibitem{IshTem:12}
H. Ishii, R. Tempo and E. Bai, \emph{A web aggregation approach for
distributed randomized pagerank algorithms}, IEEE Transactions
Automatic Control, 57, 2703--2717, 2012.

\bibitem{LiuWri:14}
Ji Liu  and S. Wright, \emph{Asynchronous stochastic coordinate
descent: parallelism and convergence properties}, SIAM Journal on
Optimization, 25(1),  2014.

\bibitem{Nec:13}
I. Necoara, \emph{Random coordinate descent algorithms for
multi-agent convex optimization over networks}, IEEE Transactions
Automatic Control, 58(8), 2001--2012, 2013.

\bibitem{NecPat:14}
I. Necoara and A. Patrascu, \emph{A random coordinate descent
algorithm for optimization problems with composite objective
function and linear coupled constraints},  Computational
Optimization and Applications, 57(2), 307--337, 2014.

\bibitem{NecCli:14}
I. Necoara and D. Clipici, \emph{ Parallel coordinate descent
methods for composite minimization: convergence analysis and error
bounds}, SIAM Journal on Optimization, 1--29, 2016
(http://arxiv.org/abs/1312.5302).

\bibitem{NecCli:13}
I. Necoara  and D. Clipici, \emph{Efficient parallel coordinate
descent algorithm for convex optimization problems with separable
constraints: application to distributed MPC}, Journal of Process
Control, 23(3), 243--253, 2013

%\bibitem{NecNes:11}
%I. Necoara, Y. Nesterov  and  F. Glineur, \emph{A random coordinate
%descent method on large optimization problems with linear
%constraints}, Technical report, Univ. Politehnica Bucharest, 1-25,
%July 2011 (ICCOPT 2013, Lisbon).

\bibitem{Nes:04}
Y. Nesterov, \emph{Introductory Lectures on Convex Optimization: A
Basic  Course},  Boston, Kluwer, 2004.

\bibitem{Nes:10}
Y. Nesterov,  \emph{Efficiency of coordinate descent methods on
huge-scale optimization problems}, SIAM Journal on Optimization,
22(2), 341–-362, 2012.

\bibitem{QinSch:10} Z. Qin, K. Scheinberg and D. Goldfarb,
\emph{Efficient block-coordinate descent  algorithms for the group
lasso}, Mathematical Programming Computation, 5(2),  143--169, 2013.

\bibitem{RicTac:11}
P. Richtarik and M. Takac, \emph{Iteration complexity of randomized
block-coordinate descent methods for minimizing a composite
function}, Mathematical Programming, 144(1-2),  1--38, 2014.

\bibitem{RicTac:13}
P. Richtarik and M. Takac, \emph{Parallel coordinate descent methods
for big data optimization}, Mathematical Programming, 1--52, 2015.

%\bibitem{RicTac:13a}
%P. Richtarik and M. Takac,  \emph{Distributed coordinate descent
%method for learning with big data}, Technical report, Univ.
%Edinburgh,  2013.

\bibitem{Tse09}
P. Tseng and S. Yun, \emph{A Block-coordinate gradient descent
method for linearly constrained nonsmooth separable optimization},
Journal of Optimization Theory and Applications, 140, 513--535,
2009.

\bibitem{XiaBoy:06}
L. Xiao and S. Boyd,  \emph{Optimal scaling of a gradient method for
distributed resource allocation}, Journal of Optimization Theory and
Applications, 129(3), 469--488, 2006.

\bibitem{YouXie:11}
K. You  and  L. Xie, \emph{Network topology and communication data
rate for consensusability of discrete-time multi-agent systems},
IEEE Transactions Automatic Control,  56(10), 2262--2275, 2011.

\bibitem{Wri:10}
S. Wright, \emph{Accelerated block coordinate relaxation for
regularized optimization}, SIAM Journal on  Optimization, 22(1),
159--186, 2012.
\end{thebibliography}
\end{document}